\newtheorem{theorem}{Theorem}[section]
\newtheorem{lemma}[theorem]{Lemma}
\newtheorem{proposition}[theorem]{Proposition}
\theoremstyle{definition}
\newtheorem{definition}[theorem]{Definition}
\newtheorem{example}[theorem]{Example}
\newtheorem{question}[theorem]{Question}
\newtheorem{remark}[theorem]{Remark}
\newcommand{\Tr}{\text{Tr}}%{\text{Tr}\,}
\newcommand{\Hom}{\text{Hom}}
\newcommand{\Rep}{\text{Rep}}
\newcommand{\Dim}{{\rm Dim}}
\newcommand{\kk}{{\bold k}}
\newcommand{\la}{\lambda}
\newcommand{\C}{\mathcal{C}}
\newcommand{\ben}{\begin{enumerate}}
\newcommand{\een}{\end{enumerate}}
\theoremstyle{plain}
\newtheorem*{sol}{Solution}
\theoremstyle{definition}
\theoremstyle{remark}
\newcommand{\solu}[1]{\begin{sol}{\bf (\ref{#1})}}
\begin{document}

\title{$p$-adic dimensions in symmetric tensor categories in characteristic $p$}

\author{Pavel Etingof, Nate Harman, Victor Ostrik}

\address{Etingof: Department of Mathematics, Massachusetts Institute of Technology, Cambridge, MA 02139,
USA}

\email{etingof@math.mit.edu}

\address{Harman: Department of Mathematics, Massachusetts Institute of Technology, Cambridge, MA 02139, USA}

\email{nharman@mit.edu}

\address{Ostrik: Department of Mathematics
University of Oregon
Eugene, OR 97403, USA}

\email{vostrik@math.uoregon.edu}

\begin{abstract} 
To every object $X$ of a symmetric tensor category over a field of characteristic $p>0$ 
we attach $p$-adic integers $\Dim_+(X)$ and $\Dim_-(X)$ whose reduction modulo $p$ 
is the categorical dimension $\dim(X)$ of $X$, coinciding with the usual dimension when $X$ is a vector space. 
We study properties of $\Dim_{\pm}(X)$, and in particular show that they don't always coincide with each other, 
and can take any value in $\Bbb Z_p$. We also discuss the connection of $p$-adic dimensions with the
theory of $\lambda$-rings and Brauer characters. 
\end{abstract}

\maketitle

\section{Introduction}

Let $\kk$ be an algebraically closed field of characteristic $p>0$, and let $\C$ be a symmetric tensor (in particular, abelian) 
category over $\kk$. Let $X\in \C$. Then to $X$ one can attach its categorical dimension $\dim(X)\in \kk$, 
and it is shown in \cite{EGNO}, Exercise 9.9.9(ii) that in fact $\dim(X)\in \Bbb F_p\subset \kk$. 
The main result of the paper is that $\dim(X)$ is the reduction modulo $p$ of a richer invariant of $X$, which takes values in 
$\Bbb Z_p$ --- the $p$-adic dimension of $X$. In fact, we define two kinds of $p$-adic dimensions --- the symmetric $p$-adic dimension 
$\Dim_+(X)$ and the exterior $p$-adic dimension $\Dim_-(X)$, which for vector spaces coincide with the usual dimension of $X$. 

Specifically, recall that if $t=\dots t_2t_1t_0\in {\Bbb Z}_p$ is a $p$-adic integer with digits $t_0,t_1,t_2,\dots$
(i.e., $t=\sum_{j\ge 0} t_jp^j$), then we can define the power series 
$$
(1+z)^t:=\prod_{j=0}^\infty (1+z^{p^j})^{t_j}\in {\Bbb F}_p[[z]],
$$
and $t$ is uniquely determined by the series $(1+z)^t$. 
This series has all the properties of the exponential function, i.e.,
$(1+z)^{t+s}=(1+z)^t(1+z)^s$, $((1+z)^t)^s=(1+z)^{ts}$, etc.

Now, the $p$-adic dimensions of $X$ are defined by the formulas
\begin{equation}\label{symdim}
\sum_{j=0}^\infty \dim(S^jX) z^j=(1-z)^{-\Dim_+(X)},
\end{equation}
and 
\begin{equation}\label{extdim} 
\sum_{j=0}^\infty \dim(\wedge^jX) z^j=(1+z)^{\Dim_-(X)},
\end{equation} 
where $S^iX$ and $\wedge^iX$ are the symmetric and exterior powers of $X$. 
The main challenge is to show that $\Dim_{\pm}(X)$ exist. We do it in Section 2.  
    
It follows immediately from the definition that 
$$
\Dim_\pm(X\oplus Y)=\Dim_\pm(X)+\Dim_\pm(Y)
$$ 
and that $p$-adic dimension is functorial,
i.e. for any (exact) symmetric tensor functor $F$ we have $\Dim_{\pm}(F(X))=\Dim_{\pm}(X)$.
Also, using Deligne's categories in characteristic $p$, we show that $\Dim_+(X)$ and $\Dim_-(X)$ 
can take any value in $\Bbb Z_p$. On the other hand, we show that in general 
$$
\Dim_+(X)\ne \Dim_-(X), 
\Dim_\pm(X\otimes Y)\ne \Dim_\pm(X)\Dim_\pm(Y),
$$ 
and at least in characteristic $2$ the dimensions $\Dim_\pm$ are not always additive on exact sequences, although these properties do 
hold for ``good'' objects. 

We will discuss applications of $p$-adic dimensions in subsequent papers. One of the motivations is to formulate 
the universal property of Deligne's categories $\Rep(S_t)$ and $\Rep(GL_t)$ in positive characteristic, defined by P. Deligne 
in his letter to the third author, \cite{De3} (see \cite{Ha}, Section 3.3, for a summary). Namely, for any nontrivial ultrafilter $\mathcal{U}$ 
on the set of natural numbers Deligne constructed the tensor categories of superexponential growth $\Rep(S_{\mathcal{U}})$, 
$\Rep(GL_{\mathcal{U}})$ over $\kk$, analogous to Deligne's categories $\Rep(S_t)$, $\Rep(GL_t)$ in characteristic zero 
(see \cite{DM,De1,De2} and \cite{EGNO}, Section 9.12). These categories are obtained by``$p$-adic interpolation'' of the ordinary representation categories 
of $S_n$ and $GL_n$ along ${\mathcal{U}}$. Deligne conjectured that they depend only on the $p$-adic integer $t:=\lim_{\mathcal{U}} n$
(so that one may denote them by $\Rep(S_t), \Rep(GL_t)$), and for $\Rep(S_{\mathcal{U}})$ this has now been proved in \cite{Ha} for $p\ge 5$. In characteristic zero, the categories $\Rep(S_t)$ and $\Rep(GL_t)$ have nice universal properties; for instance, symmetric tensor functors 
from $\Rep(GL_t)$, $\Rep(S_t)$ to a symmetric tensor category $\C$ correspond to objects (respectively, commutative Frobenius algebras) $X$ in $\C$ of dimension $t$ such that for any partition $\lambda$, the Schur functor ${\mathcal{S}}^\lambda X$ is nonzero. The notion of $p$-adic dimension should allow one to generalize these properties to positive characteristic; namely, one should expect that symmetric tensor functors from $\Rep(GL_t)$, $\Rep(S_t)$ to 
$\C$ correspond to ``good" objects (respectively, commutative Frobenius algebras) $X$ in $\C$ of {\it $p$-adic dimension $t$}. We plan to discuss these universal properties more precisely elsewhere. 

The organization of the paper is as follows. In Section 2 we define the $p$-adic dimensions and prove their existence. 
In Section 3 we study the properties of $p$-adic dimensions, and discuss their connection with $\lambda$-rings and Brauer characters. 

{\bf Acknowledgements.} We are very grateful to 
Pierre Deligne for his letter to V.O., which led to this work. 
We thank Haynes Miller for Remark \ref{Miller} and  
Siddharth Venkatesh for useful discussions. 
The work of P.E. and N.H. was partially supported by the NSF grant DMS-1502244. The work of N. H. was also partially supported by the 
National Science Foundation Graduate Research Fellowship 
under Grant No. 1122374. 
\section{$p$-adic dimensions} 

\subsection{Preliminaries on tensor categories} 

 Let $\kk$ be an algebraically closed field of characteristic $p>0$, and $\C$ a symmetric tensor category over $\kk$ 
(see \cite{EGNO}, Definitions 4.1.1, 8.1.2). For $X,Y\in \C$, we have the commutativity constraint 
$c_{X,Y}: X\otimes Y\to Y\otimes X$. Since $c_{Y,X}\circ c_{X,Y}=1_{X\otimes Y}$, we have a natural action of 
the symmetric group $S_n$ on $X^{\otimes n}$ defined by $s_{i,i+1}\mapsto c_i:=1^{\otimes i-1}\otimes c_{X,X}\otimes 1^{n-i-1}$,
where $s_{i,j}\in S_n$ is the transposition of $i$ and $j$.  

Recall from \cite{EGNO} that to every endomorphism $A:X\to X$ of an object $X\in \C$ we can attach its trace
$$
\Tr_X(A):={\rm ev}_X\circ c_{X,X^*}\circ (A\otimes 1)\circ {\rm coev}_X\in {\rm End}(\bold 1)=\kk,
$$
and in particular we can define the dimension of $X$ by $\dim(X)=\Tr_X(1)\in \kk$. 
We have $\Tr_{X^*}(A^*)=\Tr_X(A)$, hence $\dim(X^*)=\dim(X)$. 

\begin{lemma}\label{tra} Let $\sigma\in S_n$ be a permutation. Then $\Tr_{X^{\otimes n}}(\sigma)=\dim(X)^{c(\sigma)}$, where $c(\sigma)$ is the number of 
cycles of $\sigma$. 
\end{lemma}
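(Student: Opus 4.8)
The plan is to reduce the claim to the cycle structure of $\sigma$ and compute the trace one cycle at a time. First I would recall that the trace $\Tr_{X^{\otimes n}}$ is multiplicative with respect to tensor products: if $X^{\otimes n} = A \otimes B$ compatibly with a decomposition of the permutation as $\sigma = \sigma_A \times \sigma_B$ acting on the two tensor factors, then $\Tr_{A\otimes B}(\sigma_A\otimes\sigma_B) = \Tr_A(\sigma_A)\Tr_B(\sigma_B)$; this is a standard property of traces in a symmetric tensor category, following from naturality of $c$ and the hexagon axioms. Since any $\sigma\in S_n$ is a product of disjoint cycles acting on complementary subsets of the $n$ tensor slots, and $X^{\otimes n}$ factors correspondingly, multiplicativity reduces the problem to the case where $\sigma$ is a single $k$-cycle on $X^{\otimes k}$, where we must show $\Tr_{X^{\otimes k}}(\sigma) = \dim(X)$.

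For the single-cycle case, I would take $\sigma$ to be the cyclic permutation sending slot $i$ to slot $i+1$ (any $k$-cycle is conjugate to this, and trace is invariant under conjugation since $\Tr(g h g^{-1}) = \Tr(h)$). The key computational step is to unwind the definition
$$
\Tr_{X^{\otimes k}}(\sigma) = \mathrm{ev}_{X^{\otimes k}}\circ c_{X^{\otimes k},(X^{\otimes k})^*}\circ(\sigma\otimes 1)\circ \mathrm{coev}_{X^{\otimes k}},
$$
using that $\mathrm{coev}_{X^{\otimes k}}$ and $\mathrm{ev}_{X^{\otimes k}}$ factor through iterated coevaluations/evaluations of $X$ (with the appropriate reordering isomorphisms), and that $(X^{\otimes k})^* \cong (X^*)^{\otimes k}$ with the order of factors reversed. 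Threading the cyclic permutation through this composite, one sees that $k-1$ of the $k$ evaluation/coevaluation pairs cancel in pairs (each $\mathrm{coev}_X$ followed immediately by an $\mathrm{ev}_X$ on the same pair of objects gives $1_{\bold 1}$ after the braiding is accounted for), leaving exactly one surviving $\mathrm{ev}_X\circ c_{X,X^*}\circ\mathrm{coev}_X$, which is by definition $\dim(X)$. The bookkeeping here is most cleanly done by choosing the $k$-cycle carefully and drawing (or describing) the string diagram: the cycle "rotates" the circle of $k$ strands into a single loop.

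The main obstacle I anticipate is precisely this diagrammatic bookkeeping: keeping track of the braiding isomorphisms $c$ that appear when one commutes $(X^{\otimes k})^*$ past $X^{\otimes k}$ and when one reorders the tensor factors to match $\mathrm{coev}$ and $\mathrm{ev}$ of the individual copies of $X$. One must verify that all these auxiliary braidings either cancel or are absorbed into the definition of the trace of the single surviving cycle, with no stray sign or scalar. Using the coherence theorem for symmetric monoidal categories (so that all such reorderings are canonical and compose predictably) together with the identity $c_{Y,X}\circ c_{X,Y} = 1$ should make this routine, but it is the step requiring genuine care. An alternative, perhaps cleaner, route is induction on $k$: strip off one strand from the $k$-cycle by a partial evaluation, identifying $\Tr_{X^{\otimes k}}(k\text{-cycle})$ with $\Tr_{X^{\otimes(k-1)}}((k-1)\text{-cycle})$, with base case $k=1$ giving $\dim(X)$ by definition; this isolates the braiding manipulation to a single step and may be the shortest path to a fully rigorous argument.
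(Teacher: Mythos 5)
Your proposal is correct and is essentially the paper's own argument: the paper proves this diagrammatically by closing the flat braid of $\sigma$ into a link with $c(\sigma)$ components, each contributing $\dim(X)$, which is exactly what your cycle decomposition plus the single-cycle partial-trace computation makes precise. The multiplicativity and conjugation-invariance reductions you describe are the standard way to carry out that diagrammatic bookkeeping rigorously.
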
 

\begin{proof} This is easy to prove diagrammatically. Consider the flat braid $b_\sigma$ corresponding to $\sigma$. 
Then the trace of $\sigma$ is depicted by the flat link obtained by gluing the $i$-th input of $b_\sigma$ to its $i$-th output for all $i\in [1,n]$. 
It is easy to see that this flat link has $c(\sigma)$ components, and each component contributes a factor of $\dim(X)$, which implies the statement.  
\end{proof} 

\begin{lemma}\label{dimfp} (\cite{EGNO}, Exercise 9.9.9(ii)) One has $\dim(X)\in \Bbb F_p$. 
\end{lemma}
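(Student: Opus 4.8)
The plan is to use the $S_n$-action on $X^{\otimes n}$ and the fact that the full symmetric group acts. The key observation is that for any finite group $G$ acting on an object $V$ in a symmetric tensor category over $\kk$ of characteristic $p$, the trace of any $g \in G$ is constrained once $|G|$ is divisible by $p$, because the trace of a group element can be expressed in terms of traces of related elements via the algebra structure of $\kk[G]$. Concretely, I would take $n = p$ and consider the action of $S_p$ on $X^{\otimes p}$; inside $\kk[S_p]$ we have the relation $\left(\sum_{i=0}^{p-1} \rho^i\right)^2 = p\sum_{i=0}^{p-1}\rho^i = 0$ where $\rho$ is a $p$-cycle, so $e := \sum_{i=0}^{p-1}\rho^i$ is a nilpotent endomorphism of $X^{\otimes p}$, whence $\Tr_{X^{\otimes p}}(e) = 0$. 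By Lemma \ref{tra}, $\Tr_{X^{\otimes p}}(\rho^i) = \dim(X)^{c(\rho^i)}$: for $i = 0$ this is $\dim(X)^p$, and for $1 \le i \le p-1$ the power $\rho^i$ is again a $p$-cycle, so $c(\rho^i) = 1$ and the trace is $\dim(X)$. Summing gives $\dim(X)^p + (p-1)\dim(X) = 0$ in $\kk$, i.e. $\dim(X)^p = \dim(X)$ since $p-1 \equiv -1$.

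The equation $\dim(X)^p = \dim(X)$ says exactly that $\dim(X)$ is a root of $T^p - T$, and the roots of this polynomial in the field $\kk$ are precisely the elements of the prime field $\Bbb F_p$. Hence $\dim(X) \in \Bbb F_p$, which is the assertion.

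The main obstacle is the algebraic identity $e^2 = 0$ in characteristic $p$ and justifying that it implies $\Tr_{X^{\otimes p}}(e) = 0$: one needs that the trace in a symmetric tensor category is "tracial" in the sense that $\Tr(AB) = \Tr(BA)$ and additive, so that $\Tr$ of a nilpotent endomorphism vanishes — this follows because $\Tr(e) = \Tr(e \cdot 1)$ and $e = e \cdot e'$ for a suitable $e'$ with $\Tr(e'e) = \Tr(ee') = \Tr(e^2)/(\text{something})$; more cleanly, since $e$ is nilpotent and the trace is additive over the image of $e$ via the standard argument (or one simply observes $\Tr(e) = \Tr(\rho e) $ as $\rho e = e$, and then a short manipulation with $\sum \rho^i$ forces $p \cdot (\text{a trace}) = 0$ to collapse). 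In fact the cleanest route avoids nilpotency entirely: from $\rho e = e$ we get $\Tr(e) = \Tr(\rho e)$, but expanding $e$ both ways and using Lemma \ref{tra} term by term directly yields $p \cdot \dim(X) \cdot(\text{stuff}) $ — so it is simplest to just expand $\Tr_{X^{\otimes p}}\!\left((\sum_i \rho^i)^2\right)$ two ways, once as $\Tr$ of $p \sum_i \rho^i = 0$ and once by multiplying out, and compare. I expect the bookkeeping of cycle counts of products $\rho^i\rho^j = \rho^{i+j}$ to be the only place requiring care, and it is routine since every nonzero power of a $p$-cycle is a $p$-cycle.
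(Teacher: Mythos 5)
Your argument is correct and is essentially the paper's proof: the paper uses the nilpotent element $1-s$ (where $s$ is the $p$-cycle, so $(1-s)^p=0$) while you use $e=\sum_{i=0}^{p-1}\rho^i$ (with $e^2=pe=0$), and in both cases the conclusion follows from Lemma \ref{tra} together with the fact that a nilpotent endomorphism has vanishing trace, which holds in any tensor category by cyclicity of the trace applied to the factorization of the endomorphism through its image. One small warning: the ``cleanest route avoiding nilpotency'' you sketch at the end is vacuous --- expanding $\Tr_{X^{\otimes p}}(e^2)$ both ways only yields $0=0$ --- so you should retain the nilpotency argument as your main line.
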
 

\begin{proof} Let $s$ be the cyclic permutation acting on $X^{\otimes p}$. Then $s^p=1$, so $(1-s)^p=0$ (as we are in characteristic $p$), and hence 
$1-s$ is nilpotent. Thus $\Tr_{X^{\otimes p}}(1-s)=0$. But by Lemma \ref{tra}, this trace equals $\dim(X)^p-\dim(X)$. Thus $\dim(X)^p=\dim(X)$,
so $\dim(X)\in \Bbb F_p$. 
\end{proof} 

 Now recall the definition of symmetric and exterior powers of $X$. 
 This definition is given in \cite{EGNO}, Section 9.9, under the assumption of characteristic zero;
 the discussion also applies to characteristic $p>2$, but some changes are needed for $p=2$. 

 Let $\Lambda^2X$ be the image of $c_{X,X}-1$ in $X\otimes X$. We define the symmetric algebra $SX$
 to be the quotient of the tensor algebra $TX$ by the two-sided ideal generated by $\Lambda^2X$. Then 
 $SX$ is a $\Bbb Z_+$-graded algebra: $SX=\oplus_{n\ge 0} S^nX$. The object $S^nX$ is the object of 
 coinvariants of $S_n$ in $X^{\otimes n}$, and is called {\it the $n$-th symmetric power} of $X$.
 We also define the {\it dual $n$-th symmetric power}  ${\Bbb S}^nX:=(S^nX^*)^*$. 
 We have a natural inclusion $\Bbb S^nX\subset X^{\otimes n}$ as the subobject of $S_n$-invariants; i.e., 
$\Bbb S^nX$ is the intersection of the kernels of $c_i-1$ for $i=1,...,n-1$. We also have a natural projection $X^{\otimes n}\to S^nX$. 
The composition of these two maps is a morphism $\phi_n: \Bbb S^nX\to S^nX$. This morphism is an isomorphism for $n<p$ but not for $n\ge p$. 

 Similarly, consider ${\Bbb S}^2X\subset X\otimes X$, and define the exterior algebra $\wedge X$ 
 as the quotient of $TX$ by the two-sided ideal generated by ${\Bbb S}^2X$. Then $\wedge X=\oplus_{n\ge 0}\wedge^nX$, and $\wedge^nX$ 
 is a quotient of $X^{\otimes n}$ called the {\it $n$-th exterior power of $X$}. We also define the {\it dual $n$-th exterior power} 
 $\Lambda^nX:=(\wedge^nX^*)^*\subset X^{\otimes n}$, which is the intersection of the images of $c_i-1$ for $i=1,...,n-1$
 (so for $n=2$ we recover $\Lambda^2X$ defined above). We have a morphism $\psi_n: \Lambda^nX\to \wedge^nX$ which is an isomorphism for $n<p$ but not for $n\ge p$.
 
If $p>2$ then $S^2X\cong \Bbb S^2X$, so the exterior power $\wedge^n X$ can be defined as the object of anticoinvariants 
 of $S_n$ in $X^{\otimes n}$, while the dual exterior power $\Lambda^n X\subset X^{\otimes n}$ is the subobject 
 of antiinvariants (the intersection of the kernels of $c_i+1$ or, equivalently, images of $c_i-1$ over all $i$). 

Also, in any characteristic these definitions of symmetric and exterior algebras and powers coincide with the usual ones if $X$ is a vector space over $\kk$. 

Finally, note that $\dim(S^iX)=\dim(\Bbb S^iX^*)$ and $\dim(\wedge^i X)=\dim(\Lambda^iX^*)$. 

\subsection{The $p$-adic dimensions} 

Let $X\in \C$. The following theorem is our main result. 

\begin{theorem}\label{pdimexists}  
(i) There exists a unique $d_+=d_+(X)\in \Bbb Z_p$ such that 
$$
\sum_{j\ge 0}\dim(\Bbb S^jX)z^j=
(1-z)^{-d_+}.
$$ 

(ii) There exists a unique $d_-=d_-(X)\in \Bbb Z_p$ such that 
$$
\sum_{j\ge 0}\dim(\Lambda^j X)z^j=
(1+z)^{d_-}.
$$ 
\end{theorem}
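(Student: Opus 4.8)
The plan is to show that the relevant generating functions lie in the image of the exponential-type map $\mathbb{Z}_p\to 1+z\mathbb{F}_p[[z]]$, and to peel off the $p$-adic digits of $d_\pm$ one at a time.

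\emph{Uniqueness and reduction.} By Lemma~\ref{dimfp}, every categorical dimension in $\C$ lies in $\mathbb{F}_p$, so
\[
P_+(z):=\sum_{j\ge 0}\dim(\mathbb{S}^jX)\,z^j,\qquad P_-(z):=\sum_{j\ge 0}\dim(\Lambda^jX)\,z^j
\]
are elements of $1+z\mathbb{F}_p[[z]]$. As recalled in the introduction, $t\in\mathbb{Z}_p$ is determined by $(1+z)^t$; the same holds for $(1-z)^{-t}$ (apply $z\mapsto -z$ and negate the exponent). Hence $d_\pm$ are unique once they exist, and existence is exactly the assertion that $P_+$ lies in $G_+:=\{(1-z)^{-t}:t\in\mathbb{Z}_p\}$ and $P_-$ lies in $G_-:=\{(1+z)^{t}:t\in\mathbb{Z}_p\}$. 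Writing $(1-z)^{-t}=\prod_{j\ge 0}(1-z^{p^j})^{-t_j}$ for $t=\sum_j t_jp^j$ with $t_j\in\{0,\dots,p-1\}$, and noting that such products converge $z$-adically, the statement $P_+\in G_+$ is equivalent to: for every $m\ge 0$ there exist $t_0,\dots,t_m\in\{0,\dots,p-1\}$ (then necessarily unique, hence compatible as $m$ grows) with
\[
P_+(z)\cdot\prod_{j=0}^{m}(1-z^{p^j})^{t_j}\equiv 1\pmod{z^{p^{m+1}}}.
\]
I will treat $P_+$; the case of $P_-$ is parallel, and although the two are linked by $\dim(\Lambda^jX)=\dim(\wedge^jX^*)$ together with $\dim X^*=\dim X$, the statements do not appear to be formally equivalent.

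\emph{Base case.} For $j<p$ the symmetrizer $e_j:=\frac{1}{j!}\sum_{\sigma\in S_j}\sigma$ is a well-defined idempotent in $\kk[S_j]$ acting on $X^{\otimes j}$, whose image is the direct summand $\mathbb{S}^jX\subset X^{\otimes j}$ (the subobject of $S_j$-invariants). Since the trace of an idempotent equals the dimension of its image, Lemma~\ref{tra} gives
\[
\dim(\mathbb{S}^jX)=\Tr_{X^{\otimes j}}(e_j)=\frac{1}{j!}\sum_{\sigma\in S_j}\dim(X)^{c(\sigma)}=\binom{a_0+j-1}{j},
\]
where $a_0\in\{0,\dots,p-1\}$ is the lift of $\dim(X)\in\mathbb{F}_p$ and the binomial coefficient is unambiguous because $j<p$ (here one uses the classical identity $\sum_{\sigma\in S_j}x^{c(\sigma)}=x(x+1)\cdots(x+j-1)$). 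Thus $P_+(z)\equiv(1-z)^{-a_0}\pmod{z^p}$, so $t_0=a_0$; symmetrically, the antisymmetrizer gives $\dim(\Lambda^jX)=\binom{a_0}{j}$ for $j<p$, so $P_-(z)\equiv(1+z)^{a_0}\pmod{z^p}$.

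\emph{Inductive step, and the main obstacle.} Suppose $t_0,\dots,t_{m-1}$ have been produced, so that $R_m(z):=P_+(z)\prod_{j<m}(1-z^{p^j})^{t_j}\equiv 1\pmod{z^{p^m}}$, and write $R_m(z)=1+cz^{p^m}+\cdots$ with $c\in\mathbb{F}_p$; let $t_m$ be the lift of $c$. One must then show $R_m(z)(1-z^{p^m})^{t_m}\equiv 1\pmod{z^{p^{m+1}}}$, i.e.\ that modulo $z^{p^{m+1}}$ the series $R_m$ is a power series in $z^{p^m}$ whose coefficients are exactly those of $(1-z^{p^m})^{-t_m}$. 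Since there is only the one parameter $t_m$, this imposes strong constraints on $\dim(\mathbb{S}^nX)$ for $p^m\le n<p^{m+1}$, and establishing these is the crux. The tool I would use is a refinement of the nilpotence argument in the proof of Lemma~\ref{dimfp}: the cyclic permutation $s$ of the $p$ tensor factors makes $t:=s-1$ a nilpotent endomorphism of $X^{\otimes p}$ with $t^p=0$, so $X^{\otimes p}$ is a module object over the local ring $\kk[t]/(t^p)$; analyzing this module structure together with the subobjects $\mathbb{S}^nX\subset X^{\otimes n}$, and iterating the construction $X\mapsto X^{\otimes p}$ to reach the higher digits (where $X^{\otimes p^m}$ carries cyclic symmetries of its blocks), should yield the required congruences. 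Once all $t_j$ are obtained, $d_+(X):=\sum_j t_jp^j\in\mathbb{Z}_p$ is the desired integer, and likewise for $d_-(X)$. I expect the bulk of the work, and the essential use of the symmetric tensor structure of $\C$, to lie precisely in this inductive step — which is also the reason $d_\pm(X)$ is a genuinely finer invariant than $\dim(X)\in\mathbb{F}_p$.
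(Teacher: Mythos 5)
Your uniqueness argument and your base case (degrees $j<p$ via the symmetrizer idempotent and Lemma \ref{tra}) are correct, but they cover only the easy part of the problem. The inductive step --- which is the entire content of the theorem --- is missing: you say the required congruences for $p^m\le n<p^{m+1}$ ``should'' follow from analyzing the $\kk[t]/(t^p)$-module structure on $X^{\otimes p}$ coming from the cyclic shift, but you give no argument, and the tool you propose is too weak. The nilpotence of $s-1$ only produces identities among traces of group-algebra elements acting on tensor powers (this is exactly how Lemma \ref{dimfp} is proved), whereas the quantity you must control, $\dim(\Bbb S^nX)$ for $n\ge p$, is \emph{not} the trace of any element of $\kk[S_n]$ on $X^{\otimes n}$: the symmetrizer does not exist in characteristic $p$ once $n\ge p$, and $\Bbb S^nX$ is then not a direct summand of $X^{\otimes n}$ (the paper notes that $\phi_n:\Bbb S^nX\to S^nX$ fails to be an isomorphism for $n\ge p$). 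So there is no a priori bridge between $\dim(\Bbb S^nX)$ and the data visible to your cyclic-shift argument, and no indication of how ``iterating $X\mapsto X^{\otimes p}$'' would supply one.

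The paper closes exactly this gap by a different mechanism, which you would need some version of. First (Lemma \ref{le1}) it shows, by a Mackey-type reverse induction over double cosets of Young subgroups, that the trace on $\Bbb S^\la X$ of the canonical operator attached to any double coset is a universal integer polynomial in $d_1,\dots,d_{\la_1}$, where $d_r=\dim(\Bbb S^rX)$. Then, writing $n$ with largest $p$-power divisor $p^i$, it uses $\binom{n}{p^i}\not\equiv 0\pmod p$ to construct an honest idempotent projecting $\Bbb S^{n-p^i}X\otimes\Bbb S^{p^i}X$ onto $\Bbb S^nX$; its trace is computable by Lemma \ref{le1}, yielding a universal expression $d_n=Q_n(d_1,d_p,d_{p^2},\dots)$ over $\Bbb F_p$. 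Finally --- and this is the step your sketch has no analogue of --- the universality of $Q_n$ lets one determine it by evaluating on ordinary vector spaces $\kk^N$ and invoking Lucas' theorem, which forces $\sum_n d_nz^n=(1-z)^{-d_+}$ with $d_+$ read off from the digits $(-1)^{p^i}d_{p^i}$. Without a substitute for these partial symmetrizers and for the universality-plus-Lucas identification, your digit-by-digit scheme does not go through.
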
 

\begin{proof} 
(i) Let $d_r:=\dim(\Bbb S^rX)$.
Let $n$ be a positive integer, and $\la=(\la_1,\la_2,...,\la_m)$ be a partition of $n$ of length $m$.
Let 
$$
S_\la:=S_{\la_1}\times...\times S_{\la_m}\subset S_n
$$ 
be the Young subgroup of $\lambda$.
Let 
$$
\Bbb S^\la X:=\Bbb S^{\la_1}X\otimes...\otimes \Bbb S^{\la_m}X=(X^{\otimes n})^{S_\la}.
$$  

Given a double coset $B\in S_\la\backslash S_n/S_\la$, 
we have a canonical endomorphism $A_B: \Bbb S^\la X\to \Bbb S^\la X$. 
Namely, write $B$ as a disjoint union of
left cosets $B_i$ of $S_\la$ in $S_n$, and pick elements $b_i\in B_i$.
Then define the endomorphism $A'$ of $X^{\otimes n}$ by $A':=\sum_i b_i$.

{\bf Claim 1.} $A'$ restricts to an endomorphism $A_B$ of $\Bbb S^\la X$,
and the resulting endomorphism is independent on the choice of the $b_i$.

Indeed, if $b_i'\in B_i$ is another choice, then $b_i'=b_ig_i$, where $g_i\in S_\la$, 
so $b_i'=b_i$ on $\Bbb S^\lambda X$. Also, for any $g\in S_\la$ we have 
$g\sum_ib_i=\sum_i gb_i$, and $gb_i$ is a representative of the coset $gB_i$. 
But the union of $gB_i$ over all $i$ is disjoint and equals $B$, so $\sum_i gb_i=\sum_i b_i$ on $\Bbb S^\lambda X$. 
This proves the claim. 

\begin{lemma}\label{le1}
 There exists a universal polynomial $P_B(x_1,x_2,...,x_{\la_1})$
with integer coefficients such that for a tensor category $\C$ over a field of any characteristic and any $X\in \C$ we have 
$\Tr_{\Bbb S^\la X}(A_B)=P_B(d_1,d_2,...d_{\la_1})$.
\end{lemma} 

\begin{proof} The proof is by reverse induction in the length $m$ of $\la$, starting with
length $n$. If ${\rm length}(\la)=n$ then $\Bbb S^\la X=X^{\otimes n}$, $S_\la=1$, $B\in S_n$, $A_B=B$, 
and $\Tr_{\Bbb S^\la X}(A_B)=d_1^{c(B)}$ by Lemma \ref{tra}, which gives the base of induction.
So assume that ${\rm length}(\la)=m<n$ and the statement is known for all larger lengths.
If $B$ is the double coset of $1$ then $A_B=1$, and $\Tr_{\Bbb S^\la X}(A_B)=\prod_{i=1}^m d_{\la_i}$, as required.
So we may assume without loss of generality that $B$ is the double coset of $g\in S_n$, where $g\notin S_\la$. 
Let $H\subset S_\la$ be the intersection of $S_\la$ with $gS_\la g^{-1}$. 
Recall that double cosets of $S_\la$ in $S_n$ are labeled by $m$ by $m$ matrices $\bold a=(a_{ij})$ 
of nonnegative integers such that $\sum_i a_{ij}=\la_i$ and $\sum_ja_{ij}=\la_j$ (see \cite{St}, Exercise 7.77). 
By choosing $g$ to be of minimal length, we may assume that 
$$
g^{-1}Hg=S_{\bold a}:=\prod_{i=1}^m\prod_{j=1}^m S_{a_{ij}}\subset S_\la,
$$
 where $(a_{ij})$ is a partition of $\la_i$ (nontrivial for at least one $i$); see 
\cite{S}, Lemma 2. 
 Let $c_s$ be representatives of elements $s$ of $S_\la/H$ in $S_\la$. Then we can write 
 $A_B$ as 
 $$
 A_B=\sum_{s\in S_\la/H} c_sg
 $$ 
 (note that this is independent on the choice of $c_s$ since $g^{-1}Hg\subset S_\la$). 
 
Let  $\Bbb S^{\bold a}X:=\otimes_i \otimes_j \Bbb S^{a_{ij}}X$, and let
$A': \Bbb S^{\bold a}X\to X^{\otimes n}$ be defined by the same formula as $A_B$, i.e.,
$$
A'=\sum_{s\in S_\la/H} c_sg.
$$

{\bf Claim 2.} $A'$ lands in $\Bbb S^\la X\subset X^{\otimes n}$.

Indeed, let $y\in S_\la$. Then 
$$
yA'=\sum_s yc_sg=\sum_s c_sb_sg=\sum_s c_sgb_s',
$$
where $b_s\in H$ and $b_s'=g^{-1}b_sg\in g^{-1}Hg=S_{\bold a}$. So $b_s'$ acts trivially on $\Bbb S^{\bold a}X$, hence $yA'=A'$, and thus $A'$ lands in $\Bbb S^\la X$, as claimed.

It is clear that $A'$ restricts on $\Bbb S^\la X$ to $A_B$. Thus, $\Tr_{\Bbb S^\la X}(A_B)=\Tr_{\Bbb S^{\bold a}X}(A')$. But $\Tr_{\Bbb S^{\bold a}X}(A')$ is given by some universal polynomial in $d_1,...,d_{\la_1}$ with integer coefficients by the induction assumption, as $A'$ is a sum of  endomorphisms of the form $A_{B'}$, where $B'$ is a double coset of $S_{\bold a}$ in $S_n$, and $S_{\bold a}$ is a parabolic subgroup corresponding to the partition $\bold a$ with ${\rm length}(\bold a)>m$. This proves the lemma. 
\end{proof} 

\begin{lemma}\label{le2} Suppose that the ground field has characteristic $p>0$. Then
for each $n$ there exists a universal polynomial $Q_n(z_0,z_1,...)$ over $\Bbb F_p$
such that $d_n=Q_n(d_1,d_p,d_{p^2},...)$. Moreover, $Q_n$ is unique if we
require that it is of degree $<p$ in every variable. This unique polynomial is given by the formula
\begin{equation}\label{qn}
Q_n(z_0,z_1,z_2,...)=(-1)^n\prod_{i=0}^k \binom{(-1)^{p^i}z_i}{n_i},
\end{equation} 
where $n_i$ are the digits of the base $p$ expansion of $n$.
\end{lemma} 

\begin{proof} To prove existence, use induction in $n$.
The base $n=1$ is trivial. We may assume that $n$ is not a power of $p$
(otherwise, if $n=p^i$, then $Q_n(z)=z_i$). Suppose the statement is known
below $n$. Let $p^i$ be the largest power of $p$ dividing $n$ ($i\ge 0$). Then it is easy to see that 
the binomial coefficient $\binom{n}{p^i}$ is not zero in $\Bbb F_p$.
Therefore, $\Bbb S^nX$ is the image of the following projector
$P$ acting on $\Bbb S^{n-p^i}X\otimes \Bbb S^{p^i}X$: 
$$
P=\binom{n}{p^i}^{-1}\sum_s c_s,
$$
where $c_s$ are representatives of the elements $s\in S_n/S_{n-p^i}\times S_{p^i}$ in $S_n$. Thus, $d_n=\Tr(P)$.
But by Lemma \ref{le1}, $\Tr(P)$ is given by a universal polynomial over $\Bbb F_p$ in $d_1,...,d_r$, where 
$$
r={\rm max}(n-p^i,p^i).
$$ 
Since $r<n$, by the induction assumption, $d_i$, $i\le r$ are universal polynomials over $\Bbb F_p$ of $d_1,d_p,d_{p^2},\dots $.
Hence so is  $d_n$. This shows that the polynomial $Q_n$ exists. 

Moreover, since by Lemma \ref{dimfp} we have $d_i\in \Bbb F_p$,
in $Q_n$ we can replace $z_i^p$ with $z_i$, and thus can choose $Q_n$ in such a way that it has degree $<p$ with respect to each variable. 

Now we want to show that $Q_n$ with this degree requirement is unique and is given by formula \eqref{qn}. 
To do so, pick some $Q_n$ satisfying the degree requirement, and let 
$$
f(z_0,...,z_k)=Q_n(z_0,...,z_k)-(-1)^n\prod_{i=0}^k \binom{(-1)^{p^i}z_i}{n_i},
$$
for sufficiently large $k$. 
Let $N_0,N_1,...,N_k\in [0,...,p-1]$, and let
$$
N=p^{k+1}-\sum_{i=0}^k N_ip^i.
$$
Let $\C$ be the category of vector spaces, and $X=\kk^N$. Then 
$d_r=\binom{N+r-1}{r}=(-1)^r\binom{-N}{r}$. By Lucas' theorem, for $r<p^{k+1}$ we then get 
$$
d_r=(-1)^r\prod_{i=0}^k \binom{N_i}{r_i},
$$
where $r_j$ are the base $p$ digits of $r$, and we view the right hand side modulo $p$. In particular, 
$d_{p^i}=(-1)^{p^i}N_i$ for $i=0,...,k$, where $N_i$ are taken mod $p$. Thus, we get 
$$
d_r=(-1)^r\prod_{i=0}^k \binom{(-1)^{p^i}d_{p^i}}{r_i},
$$
so $f(d_1,d_p,...,d_{p^k})=0$ for any $d_1,d_p...,d_{p^k}\in \Bbb F_p$ (as $N_i$ are arbitrary). 
Since the degree of $f$ in each variable is $<p$, this implies that $f=0$, as desired. 
\end{proof} 

Now we are ready to prove (i). Let $\delta_i$ be the representative of $(-1)^{p^i}d_{p^i}$ in 
$[0,p-1]$, and let $d_+=-\sum_{i\ge 0}\delta_i p^i\in \Bbb Z_p$. We have 
$$
\sum_{n=0}^\infty d_n z^n=\prod_{i\ge 0} \sum_{n_i=0}^{p-1}\binom{(-1)^{p^i}d_{p^i}}{n_i}(-z)^{n_ip^i}=
$$
$$
\prod_{i\ge 0} \sum_{n_i=0}^{p-1}\binom{\delta_i}{n_i}(-z)^{n_ip^i}=\prod_{i\ge 0} (1-z^{p^i})^{\delta_i}=(1-z)^{-d_+}.
$$
This proves (i). 

(ii) First assume that $p>2$. Consider the symmetric tensor category 
$\C':={\rm Supervec}\boxtimes \C$, and 
define the object $\Pi X:=X\otimes \kk_-\in \C'$, where $\kk_-$ is the 
odd 1-dimensional supervector 
space. Then $\Lambda^n X=\Bbb S^n\Pi X$ for even $n$ 
 and $\Lambda^n X=\Pi \Bbb S^n\Pi X$ for odd $n$. Thus, (ii) reduces to (i), with
$d_-(X)=-d_+(\Pi X)$. 

Now consider the case $p=2$. In this case, we will use an argument 
analogous to the proof of (i), replacing $\Bbb S^\bullet$ by $\Lambda^\bullet$
(this argument can also be used as an alternative proof for $p>2$, inserting appropriate signs). 
Let us describe the necessary changes. First we need to prove an analog of Claim 1: 

{\bf Claim 3.} $A'$ restricts to an endomorphism $A_B$ of $\Lambda^\la X$,
and the resulting endomorphism is independent on the choice of the $b_i$.

To see this, note first that $\Lambda^\lambda X\subset \Bbb S^\lambda X$, 
so by Claim 1, the choice of $b_i$ does not matter. To show that 
$A_B$ lands in $\Lambda^\lambda X$, we need to show that 
$A_B$ lands in the image of $c_j+1$ in $X^{\otimes n}$ for each $j\ne \lambda_1,\lambda_1+\lambda_2,...$. 
Let $T_j$ be the set of all $i$ such that $s_{j,j+1}B_i=B_i$.
If $i\notin T_j$, we may choose $b_i\in B_i$ and $b_{i'}\in B_{i'}:=s_{j,j+1}B_i$ so that 
$s_{j,j+1}b_i=b_{i'}$. Thus, it suffices to show that for each $i\in T_j$, $b_i$ maps  
$\Lambda^\lambda X$ to the image of $c_j+1$. To this end, note that $s_{j,j+1}b_i=b_is_{k,l}$, 
where $s_{k,l}\in S_\la$. Since $\wedge^\la X$ is contained in the image of $s_{k,l}+1$, the image 
of $b_i$ restricted to $\Lambda^\la X$ is contained in the image 
of $b_i(s_{k,l}+1)=(s_{j,j+1}+1)b_i$ on $X^{\otimes n}$. This implies Claim 3. 

Now we need to prove an analog of Claim 2. 

{\bf Claim 4.} $A'$ lands in $\Lambda^\la X\subset X^{\otimes n}$.

To see this, let $U_j$ be the set of $s$ such that $s_{j,j+1}s=s$. As in Claim 3, 
it suffices to show that for $s\in U_j$, $c_sg$ restricted to $\Lambda^{\bold a}X$ lands in 
the image of $s_{j,j+1}+1$. We have $s_{j,j+1}c_sg=c_sgs_{k,l}$, where $s_{k,l}\in S_{\bold a}$. 
We have $\Lambda^{\bold a} X\subset {\rm Im}(s_{k,l}+1)$, so the image of the 
restriction of $c_sg$ to $\Lambda^{\bold a} X$ is contained in the image of
$c_sg(s_{k,l}+1)=(s_{j,j+1}+1)c_sg$. This proves Claim 4. 

The rest of the proof is a straightforward generalization of the proof of (i), starting from Lemma \ref{le2},
and the formulas are simpler. Namely, we have $N=N_0+N_1p+...+N_kp^k$, and 
the universal polynomial analogous to the polynomial $Q_n$ of Lemma \ref{le2} has the form
$$
Q_n^-(z_0,z_1,...)=\prod_{i\ge 0}\binom{z_i}{n_i}.
$$
\end{proof} 

\begin{example} Here are examples of computation of the polynomial 
$P_B$ constructed in the proof of Lemma \ref{le1}. 

1. Let $n=3$, $\lambda=(2,1)$. Then $S_n=S_3$ consists of two double cosets 
of $S_\lambda=S_2=\langle(12)\rangle$ --- the group $S_\lambda$ itself and the remaining set $B$ of $4$ elements: 
$B=\lbrace{(23),(13),(123),(132)\rbrace}$. The element of minimal length in $B$ is $g=(23)$ (it has length $1$). 
Then $H$ is the trivial group, and so is $S_{\bold a}$. Thus $A_B=(23)+(12)(23)=(23)+(123)$. 
Hence 
$$
P_B=\Tr(A_B)=\Tr(A')=\Tr_{X^{\otimes 3}}((23)+(123))=d_1^2+d_1.  
$$
So in characteristic $2$ we get $d_3=d_1d_2+d_1^2+d_1$, which gives $d_1d_2=Q_3(d_1,d_2)$ (as $d_1\in \Bbb F_2$). 

2. Let $n=4$, $\lambda=(3,1)$. Then $S_n=S_4$ consists of two double cosets 
of $S_\lambda=S_3=\langle (12),(23)\rangle$ --- the group $S_\lambda$ itself and the remaining set $B$ of $18$ elements. 
The element of minimal length in $B$ is $g=(34)$ (it has length $1$). Thus $H=\langle (12)\rangle=S_{\bold a}$.
Hence $A_B=(34)+(123)(34)+(213)(34)=(34)+(1234)+(2134)$. So 
$$
P_B=\Tr(A_B)=\Tr(A')=\Tr_{\Bbb S^2X\otimes X\otimes X}((34)+(1234)+(2134))=
$$
$$
=d_1d_2+\Tr_{\Bbb S^2X\otimes X\otimes X}((1234)+(234)).
$$ 
The remaining trace is the trace corresponding to a double coset $B'$ of $S_2=\langle (12)\rangle$ in $S_4$ consisting of the elements 
$(134),(234),(1234),(2134)$ in which the minimal length element is $(234)$ (of length $2$). 
So this trace equals 
$$
\Tr_{X^{\otimes 4}}((1234)+(234))=d_1+d_1^2.
$$
Thus we get that $P_B=d_1d_2+d_1+d_1^2$. Hence, in characteristic $3$ we have 
$$
d_4=d_1d_3+\frac{1}{2}d_1(d_1^2+d_1)+d_1+d_1^2=d_1d_3-d_1^3+d_1,
$$
which gives $d_1d_3=Q_3(d_1,d_3)$, as $d_1\in \Bbb F_3$. 
\end{example} 

\begin{definition}
We call the number $d_+(X^*)$ the {\it symmetric $p$-adic dimension of $X$}, denoted $\Dim_+(X)$, 
and call the number $d_-(X^*)$ the {\it exterior $p$-adic dimension} of $X$, denoted $\Dim_-(X)$. 
\end{definition} 

Thus, the numbers $\Dim_\pm(X)$ satisfy equations \eqref{symdim},
\eqref{extdim}. 

\begin{example} 
Let $X$ be a supervector space. Then $\Dim_+(X)=\Dim_-(X)=\dim(X_0)-\dim(X_1)$, where 
$X_0$ and $X_1$ are the even and odd parts of $X$. 
\end{example} 

Let $R$ be the ring of integer-valued polynomials, i.e. polynomials $f\in \Bbb Q[t]$ 
which map integers to integers (or, equivalently, map sufficiently large integers to integers). 
It is well known that $R$ has a $\Bbb Z$-basis consisting of the binomial coefficients 
$e_i:=\binom{t}{i}$, $i\ge 0$, with multiplication law 
$$
e_ie_j=\sum_k C_{ij}^k e_k, \quad C_{ij}^k=\frac{k!}{(k-i)!(k-j)!(i+j-k)!}.
$$
In particular, the leading coefficient (i.e., that of $e_{i+j}$) is $\binom{i+j}{i}$. 

Recall that if $m$ is not a power of a prime $p$ then there exists an integer $0<i<m$ such that $\binom{m}{i}$ 
is not divisible by $p$. Thus, $R$ is generated by $e_j$ where $j$ is a prime power. 

Let ${\mathcal{I}}\subset \Bbb Z[x_1,x_2,...]$ 
be the ideal of polynomials $f$ such that for a symmetric tensor category ${\mathcal{C}}$ over any field and any $X\in {\mathcal{C}}$,   
one has $f(d_1,d_2,...)=0$, where $d_i=\dim \wedge^i X$. Let ${\mathcal{R}}:=\Bbb Z[x_1,x_2,...]/{\mathcal{I}}$. 

\begin{proposition}\label{iso} We have an isomorphism $\psi: {\mathcal{R}}\to R$ given by $\psi(x_i)=e_i$. 
\end{proposition}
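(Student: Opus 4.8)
The plan is to exhibit $\psi$ as the map induced on the quotient $\mathcal R=\Bbb Z[x_1,x_2,\dots]/\mathcal I$ by the ring homomorphism $\bar\psi\colon\Bbb Z[x_1,x_2,\dots]\to R$ with $\bar\psi(x_i)=e_i$, and then to prove that $\ker\bar\psi=\mathcal I$. Since $e_0=1$ and the $e_i$, $i\ge 0$, form a $\Bbb Z$-basis of $R$, the homomorphism $\bar\psi$ is automatically surjective, so once $\ker\bar\psi=\mathcal I$ is established we obtain an isomorphism $\mathcal R\xrightarrow{\,\sim\,}R$, and this is $\psi$. Thus everything reduces to the two inclusions between $\ker\bar\psi$ and $\mathcal I$.

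The inclusion $\mathcal I\subseteq\ker\bar\psi$ (which also makes $\psi$ well defined on the quotient) is immediate: applying the defining property of $\mathcal I$ to the category $\Vect_{\Bbb Q}$ and the object $X=\Bbb Q^n$ shows that any $f\in\mathcal I$ satisfies $f\bigl(\tbinom n1,\tbinom n2,\dots\bigr)=0$ for every $n\ge 0$; since $e_i(n)=\tbinom ni$, the integer-valued polynomial $\bar\psi(f)=f(e_1,e_2,\dots)$ vanishes at all nonnegative integers and is therefore $0$ in $R$.

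The heart of the matter is the reverse inclusion $\ker\bar\psi\subseteq\mathcal I$. So suppose $f(e_1,e_2,\dots)=0$ in $R$, equivalently $f\bigl(\tbinom t1,\tbinom t2,\dots\bigr)=0$ in $\Bbb Q[t]$; one must show $f(d_1,d_2,\dots)=0$, where $d_i=\dim(\wedge^iX)$, for every symmetric tensor category $\mathcal C$ over every field $\kk$ and every $X\in\mathcal C$. I would split this according to $\mathrm{char}(\kk)$. If $\mathrm{char}(\kk)=0$, then by the discussion of Section~2.1 one has $d_i=\dim(\wedge^iX)=\dim(\Lambda^iX)$ and $\Lambda^iX$ is the image of the idempotent antisymmetrizer $\frac1{i!}\sum_{\sigma\in S_i}{\rm sgn}(\sigma)\,\sigma$ on $X^{\otimes i}$; hence by Lemma~\ref{tra} and the classical identity $\sum_{\sigma\in S_i}{\rm sgn}(\sigma)\,y^{c(\sigma)}=y(y-1)\cdots(y-i+1)$ we get $d_i=\binom{\dim X}{i}$. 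The polynomial identity $f\bigl(\tbinom t1,\tbinom t2,\dots\bigr)=0$ holds over $\Bbb Q\subseteq\kk$, so substituting $t=\dim X\in\kk$ gives $f(d_1,d_2,\dots)=0$. If $\mathrm{char}(\kk)=p>0$, then Theorem~\ref{pdimexists}(ii), in the form of \eqref{extdim}, gives $\sum_j d_jz^j=(1+z)^{\Dim_-(X)}$ with $\Dim_-(X)=\sum_{k\ge 0}a_kp^k\in\Bbb Z_p$, $a_k\in[0,p-1]$; using $(1+z)^{\Dim_-(X)}=\prod_{k\ge 0}(1+z^{p^k})^{a_k}$ one reads off that for $j=\sum_k j_kp^k$ in base $p$ the coefficient $d_j$ equals $\prod_k\binom{a_k}{j_k}$, which by Lucas' theorem (approximating $\Dim_-(X)$ by integers) is the reduction mod $p$ of $e_j\bigl(\Dim_-(X)\bigr)=\binom{\Dim_-(X)}{j}\in\Bbb Z_p$. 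Therefore $f(d_1,d_2,\dots)$ is the reduction mod $p$ of $f(e_1,e_2,\dots)$ evaluated at $\Dim_-(X)$, which is $0$ because $f(e_1,e_2,\dots)=0$ in $R$.

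The main obstacle is the positive-characteristic case, and it is exactly there that Theorem~\ref{pdimexists} is indispensable: it forces the whole sequence $(d_j)_{j\ge 0}$ to be the sequence of values at a single point $\Dim_-(X)\in\Bbb Z_p$ of the integer-valued polynomials $e_j$, so that a relation holding identically among the $e_j$ in $R$ is forced upon the $d_j$. With both cases settled we have $\ker\bar\psi=\mathcal I$; combined with the surjectivity of $\bar\psi$ noted above, this shows that $\psi\colon\mathcal R\to R$ is an isomorphism, as claimed.
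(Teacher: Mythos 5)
Your proof is correct, and for the harder inclusion it takes a genuinely different route from the paper. Both arguments agree on the easy direction ($\mathcal I\subseteq\ker\bar\psi$, via evaluation on finite-dimensional vector spaces, where $d_i=\binom{n}{i}$) and on the observation that surjectivity of $\bar\psi$ is automatic since the $e_i$ form a $\Bbb Z$-basis of $R$. For the reverse inclusion $\ker\bar\psi\subseteq\mathcal I$, the paper first identifies $\ker\bar\psi$ as the ideal generated by the structure-constant relations $f_{ij}=x_ix_j-\sum_kC_{ij}^kx_k$ (using that modulo these relations every polynomial reduces to a $\Bbb Z$-linear combination of $1,x_1,x_2,\dots$), and then verifies that each $f_{ij}$ lies in $\mathcal I$ by the generating-function identity $(1+z)^d(1+w)^d=(1+z+w+zw)^d$ with $d=\Dim_-(X)$; this has the side benefit of exhibiting the explicit universal quadratic relations \eqref{quadrel}, which the paper reuses later. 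You instead avoid any description of the generators of $\ker\bar\psi$ and argue directly that an arbitrary $f\in\ker\bar\psi$ is a universal relation, because in every symmetric tensor category the sequence $\bigl(\dim\wedge^jX\bigr)_j$ is a specialization of $\bigl(e_j\bigr)_j$ at a single parameter: at $\dim X\in\kk$ in characteristic zero (via the trace of the antisymmetrizer and $\sum_\sigma{\rm sgn}(\sigma)y^{c(\sigma)}=y(y-1)\cdots(y-i+1)$), and at $\Dim_-(X)\in\Bbb Z_p$ reduced mod $p$ in characteristic $p$ (via Theorem \ref{pdimexists}(ii) and Lucas' theorem). Both proofs lean on Theorem \ref{pdimexists} in an essential way; yours makes the case division by characteristic explicit, which the paper's write-up glosses over, while the paper's version isolates reusable quadratic identities. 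The only small points to keep in mind in your version are the standard facts that $e_j$ extends $p$-adically continuously to $\Bbb Z_p$ and that the trace of a split idempotent equals the dimension of its image, both of which are routine.
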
 

\begin{proof} Consider the map $\xi: \Bbb Z[x_1,x_2,...]\to R$ sending $x_i$ to $e_i$. Let $I={\rm Ker}\xi$. 
We need to show that $I={\mathcal{I}}$. Note that $I$ is generated by the elements $f_{ij}:=x_ix_j-\sum_k C_{ij}^k x_k$.
We claim that $f_{ij}\in {\mathcal{I}}$, i.e. 
\begin{equation}\label{quadrel} 
d_id_j=\sum_k C_{ij}^k d_k.
\end{equation} 
in any tensor category. To see this, multiply the left hand side by $z^iw^j$ and sum over $i,j$. 
We get $(1+z)^d(1+w)^d$, where $d=\Dim_-(X)$. Write this as $(1+z+w+zw)^d$ and expand it as 
$$
(1+z)^d(1+w)^d=\sum_k d_k (z+w+zw)^k=
$$
$$
\sum_{i,j,k}\frac{k!}{(k-i)!(k-j)!(i+j-k)!}d_kz^{k-j}w^{k-i}(zw)^{i+j-k}=
$$
$$
\sum_{i,j,k}\frac{k!}{(k-i)!(k-j)!(i+j-k)!}d_kz^iw^j,
$$
which is exactly the generating function of the right hand sides of \eqref{quadrel}. Thus, $I\subset {\mathcal{I}}$, and our job is to 
establish the opposite inclusion. To do so, let $f\in {\mathcal{I}}$, and assume that $f$ depends 
only on $x_1,...,x_r$. Consider the category ${\mathcal C}={\rm Vect}_{\Bbb C}$ 
of complex vector spaces. Let $X=\Bbb C^n$. Then $d_i=\binom{n}{i}$, so $f(1,n,...,\binom{n}{r})=0$ for any positive integer $n$. 
Then $f(1,t,...,\binom{t}{r})=0$ in $R$, which implies that $f\in I$, as desired.   
\end{proof} 

\begin{remark} Using Proposition \ref{iso}, we see that the polynomial $P_B$ in 
Lemma \ref{le1} is not unique, since $R$ is generated by $e_j$ with $j$ being a prime power
(so, $d_6$ expresses in terms of $d_1$,...,$d_5$). 
\end{remark} 

\begin{remark} Let $\widetilde{\mathcal{I}}\subset \Bbb Z[x_1,x_2,...]$ 
be the ideal of polynomials $f$ such that for a symmetric tensor category ${\mathcal{C}}$ over any field and any $X\in {\mathcal{C}}$,   
one has $f(d_1,d_2,...)=0$, where $d_i=\dim S^i X$. Let $\widetilde{\mathcal{R}}:=\Bbb Z[x_1,x_2,...]/\widetilde{\mathcal{I}}$. 
Then similarly to Proposition \ref{iso} one shows that we have an isomorphism $\widetilde{\psi}: \widetilde{\mathcal{R}}\to R$
such that $\widetilde{\psi}(x_i)=s_i$, where $s_i:=\binom{t+n-1}{n}\in R$. 
\end{remark} 

\section{Properties of $p$-adic dimensions} 

In this section we will study properties of $p$-adic dimensions $\Dim_\pm$. 
Some of the properties are obvious from the definition --- e.g. $\Dim_\pm$ are invariant 
under symmetric tensor functors, and $\Dim_\pm (X)$ are equal to $\dim(X)$ modulo $p$. 
Some less obvious properties are discussed below. 

\subsection{Values of $p$-adic dimensions} 
\begin{proposition}\label{prope1}
$\Dim_+(X)$ and $\Dim_-(X)$ can take any value in $\Bbb Z_p$. 
\end{proposition}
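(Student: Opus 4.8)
The plan is to realize an arbitrary $t\in\Bbb Z_p$ as a $p$-adic dimension inside one of Deligne's $p$-adic interpolation categories. Fix $t=\sum_{j\ge 0}t_jp^j\in\Bbb Z_p$. For each $k\ge 0$ the set $A_k:=\{n\in\Bbb N:\ n\equiv t_0+t_1p+\dots+t_{k-1}p^{k-1}\ \mathrm{mod}\ p^{k}\}$ is an infinite arithmetic progression, and $A_0\supseteq A_1\supseteq\cdots$, so the $A_k$ generate a filter on $\Bbb N$; extend it to a nontrivial ultrafilter $\mathcal U$, so that $\lim_{\mathcal U}n=t$ in $\Bbb Z_p$. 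Let $\C:=\Rep(GL_{\mathcal U})$ be Deligne's category attached to $\mathcal U$ (\cite{De3}; see also \cite{Ha}, Section 3.3): it is a symmetric tensor category over $\kk$, obtained as the $p$-adic interpolation along $\mathcal U$ of the categories $\Rep(GL_n,\kk)$, and it contains the distinguished object $V$ interpolating the standard representations $\kk^n$ of $GL_n$. (Alternatively, and more elementarily, one may take $\C$ to be the ultraproduct $\prod_{\mathcal U}\Vect_\kk$ with $V=(\kk^n)_{\mathcal U}$; this is a symmetric tensor category over the algebraically closed field $\kk^{\Bbb N}/\mathcal U$ of characteristic $p$, to which Theorem \ref{pdimexists} applies verbatim.) I will show that $\Dim_+(V)=\Dim_-(V)=t$.

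Next I would compute $\dim(\Bbb S^jV)$ and $\dim(\Lambda^jV)$. The objects $\Bbb S^jV$ and $\Lambda^jV$ are built functorially from $V^{\otimes j}$, the symmetry, and the kernels and images of finitely many endomorphisms of $V^{\otimes j}$; in an ultraproduct of abelian tensor categories all of these operations --- tensor products, the symmetry, kernels, images, and hence also traces --- are computed componentwise, so $\Bbb S^jV=(\Bbb S^j\kk^n)_{\mathcal U}$ and $\Lambda^jV=(\Lambda^j\kk^n)_{\mathcal U}$, whence
\[
\dim(\Bbb S^jV)=\lim_{\mathcal U}\left(\binom{n+j-1}{j}\bmod p\right),\qquad
\dim(\Lambda^jV)=\lim_{\mathcal U}\left(\binom{n}{j}\bmod p\right).
\]
Since the integer-valued polynomials $\binom{x+j-1}{j}$ and $\binom{x}{j}$ define continuous functions $\Bbb Z_p\to\Bbb Z_p$ (concretely, by Lucas' theorem $\binom{m}{j}\bmod p$ depends only on $m\bmod p^{k}$ whenever $p^{k}>j$, and $A_k\in\mathcal U$), and reduction mod $p$ is continuous, we get $\dim(\Bbb S^jV)=\binom{t+j-1}{j}\bmod p$ and $\dim(\Lambda^jV)=\binom{t}{j}\bmod p$. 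Hence, in $\Bbb F_p[[z]]$,
\[
\sum_{j\ge 0}\dim(\Bbb S^jV)\,z^j=(1-z)^{-t},\qquad \sum_{j\ge 0}\dim(\Lambda^jV)\,z^j=(1+z)^{t},
\]
where $(1-z)^{-t}$ and $(1+z)^{t}$ are the series defined in the introduction via the digits of $t$; this uses that these series are the reductions mod $p$ of the ordinary binomial series $\sum_j\binom{t+j-1}{j}z^j$ and $\sum_j\binom{t}{j}z^j$ in $\Bbb Z_p[[z]]$, which follows from $(1\pm z)^{p}\equiv 1\pm z^{p}\ \mathrm{mod}\ p$ and the digit decomposition of $t$. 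By the uniqueness statement in Theorem \ref{pdimexists}, together with the fact (recalled in the introduction) that a $p$-adic integer is determined by the corresponding series, we conclude $d_+(V)=d_-(V)=t$. Running the identical argument with $V^*=((\kk^n)^*)_{\mathcal U}$, which has the same componentwise dimensions, yields $\Dim_+(V)=d_+(V^*)=t$ and $\Dim_-(V)=d_-(V^*)=t$. As $t\in\Bbb Z_p$ was arbitrary, the proposition follows.

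The one substantive point, used throughout the computation, is the claim that the formation of $\Bbb S^j$ and $\Lambda^j$ commutes with the ultraproduct --- equivalently, that Deligne's category $\Rep(GL_{\mathcal U})$ (or the ultraproduct $\prod_{\mathcal U}\Vect_\kk$) is a bona fide abelian symmetric tensor category in which finite (co)limits, duals and traces are computed componentwise. For $p>2$ one may trade exterior powers for symmetric powers via $\mathrm{Supervec}$, exactly as in the proof of Theorem \ref{pdimexists}(ii), so that only $\Bbb S^j$ --- the intersection of the kernels of the $c_i-1$ on $V^{\otimes j}$ --- is needed; for $p=2$ one additionally needs it for the images of the $c_i+1$ defining $\Lambda^j$. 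The componentwise description of kernels, images, duals and traces in an ultraproduct of module categories is routine, but it is precisely what the construction of Deligne's interpolation categories supplies, and I would cite \cite{De3} and \cite{Ha} for it rather than reproving it here.
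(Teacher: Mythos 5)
Your proof is correct and takes essentially the same route as the paper: the paper's own argument simply invokes Deligne's categories $\Rep(S_{\mathcal{U}})$ from \cite{De3,Ha} and asserts that the tautological object $V$ has $\Dim_+(V)=\Dim_-(V)=\lim_{\mathcal{U}}n=t$. You have merely filled in the details the paper leaves as ``easy to see'' --- the construction of $\mathcal{U}$ with $\lim_{\mathcal{U}}n=t$, the componentwise computation of $\dim(\Bbb S^jV)$ and $\dim(\Lambda^jV)$ via Lucas' theorem, and the matching of the resulting generating functions with $(1\mp z)^{\mp t}$ --- all of which is sound.
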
 

\begin{proof} 
This follows from the existence of Deligne's examples (see \cite{Ha}, Subsection 3.3).
Namely, e.g. take Deligne's category $\Rep(S_{\mathcal{U}})$ for an ultrafilter ${\mathcal{U}}$, and let $V$ be the tautological object in this category. 
Then it is easy to see that $\Dim_+(V)=\Dim_-(V)=t$, where $t=\lim_{\mathcal{U}}n\in \Bbb Z_p$, and $t$ can be arbitrary. 
\end{proof} 

\subsection{Additivity on exact sequences}

If $Y$ is a $\Bbb Z_+$-filtered object of a symmetric tensor category ${\mathcal{C}}$, then we will denote by 
${\rm gr}Y$ the associated graded object 
$\oplus_i F_{i+1}Y/F_iY$; so, $\dim Y=\dim {\rm gr}Y$. Also note that if 
$Y$ is $\Bbb Z_+$-filtered then the symmetric algebra $SY$ carries an induced filtration. 

 Let 
$$
0\to X\to Y\to Z\to 0
$$ 
be a short exact sequence; it induces a 2-step filtration on $Y$. 

\begin{proposition}\label{prope2}
Suppose that the natural surjective morphism 
$$
\phi_+: S(X\oplus Z)=S({\rm gr}Y)\to {\rm gr}SY
$$
is an isomorphism. Then $\Dim_+(Y)=\Dim_+(X)+\Dim_+(Z)$. 
Likewise, if the natural surjective morphism 
$$
\phi_-: \wedge(X\oplus Z)=\wedge({\rm gr}Y)\to {\rm gr}\wedge Y
$$
is an isomorphism then $\Dim_-(Y)=\Dim_-(X)+\Dim_-(Z)$.
\end{proposition}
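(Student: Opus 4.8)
The plan is to compute the generating function $\sum_{j\ge 0}\dim(S^jY)\,z^j$ in two ways and then invoke the uniqueness clause of Theorem~\ref{pdimexists} (equivalently, the fact noted in the introduction that a $p$-adic integer $t$ is determined by the series $(1-z)^{t}$). On one hand, this series equals $(1-z)^{-\Dim_+(Y)}$ by \eqref{symdim}. On the other hand, I claim it equals $\sum_{j\ge 0}\dim\bigl(S^j(X\oplus Z)\bigr)z^j$, which by \eqref{symdim} applied to $X\oplus Z$ together with the additivity of $\Dim_+$ on direct sums (noted in the introduction) is $(1-z)^{-\Dim_+(X\oplus Z)}=(1-z)^{-\Dim_+(X)-\Dim_+(Z)}$. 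Comparing exponents then gives the assertion.

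To prove the claim I would argue in each symmetric degree $j$ separately. The $2$-step filtration $0\subseteq X\subseteq Y$ induces a finite filtration on each tensor power $Y^{\otimes j}$ (it is the $j$-fold tensor product of $2$-step filtrations), hence, via the quotient map $Y^{\otimes j}\to S^jY$, a finite filtration on $S^jY$. Since $\dim$ is additive on short exact sequences — the fact recorded just before the statement, giving $\dim W=\dim\,\mathrm{gr}\,W$ for a filtered object $W$ — we get $\dim(S^jY)=\dim(\mathrm{gr}\,S^jY)$. Now $\mathrm{gr}\,SY=\bigoplus_j\mathrm{gr}\,S^jY$ carries two gradings: the symmetric-power degree $j$ and the filtration degree coming from $0\subseteq X\subseteq Y$. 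The natural surjection $\phi_+\colon S(\mathrm{gr}\,Y)=S(X\oplus Z)\to\mathrm{gr}\,SY$ is an algebra homomorphism carrying the generating object $\mathrm{gr}\,Y$ into $\mathrm{gr}\,S^1Y=\mathrm{gr}\,Y$, so it preserves the symmetric-power grading; hence its degree-$j$ component is a surjection $S^j(X\oplus Z)\twoheadrightarrow\mathrm{gr}\,S^jY$, and the hypothesis that $\phi_+$ is an isomorphism says precisely that each of these is an isomorphism of objects of $\C$. Therefore $\dim(S^jY)=\dim S^j(X\oplus Z)$ for all $j$, as claimed.

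The exterior case is handled verbatim: replace $S^\bullet$ by $\wedge^\bullet$ and $SY$ by $\wedge Y$ throughout, use the filtration induced on $\wedge Y$ as a quotient of $TY$, the morphism $\phi_-$, and \eqref{extdim} in place of \eqref{symdim}. The only step requiring care — and the one I would write out most carefully — is keeping the two gradings on $\mathrm{gr}\,SY$ apart: one checks that $\phi_+$ respects the \emph{symmetric-power} grading (so that the isomorphism hypothesis may be read off one symmetric degree at a time), while $\mathrm{gr}$ itself is formed with respect to the \emph{filtration} grading. Once this is disentangled, the rest is a formal manipulation of generating functions using only results already established in Section~2, so I do not expect any genuine obstacle.
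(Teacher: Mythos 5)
Your proposal is correct and follows essentially the same route as the paper: both compute the generating function $\sum_j\dim(S^jY)z^j$ in two ways, once as $(1-z)^{-\Dim_+(Y)}$ and once via $\dim S^jY=\dim\mathrm{gr}\,S^jY=\dim S^j(X\oplus Z)$ using the isomorphism hypothesis, and then compare exponents (the paper phrases the second computation as the product of the Hilbert series of $SX$ and $SZ$, which is the same as your appeal to additivity of $\Dim_+$ on direct sums). Your extra care about the two gradings and about $\dim W=\dim\mathrm{gr}\,W$ only makes explicit what the paper leaves implicit.
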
 

\begin{proof} For a graded ind-object $W=\oplus_{n\ge 0}W_n$ 
of ${\mathcal{C}}$, define its Hilbert series by 
$$
h_W(z)=\sum_{n\ge 0}\dim W_n z^n.
$$
Then by definition of $p$-adic dimensions, for any object $V\in {\mathcal{C}}$, 
$h_{SV}(z)=(1-z)^{-\Dim_+(V)}$ and $h_{\wedge V}(z)=(1+z)^{\Dim_-(V)}$. 
Thus, the result is obtained by computing the Hilbert series of $SY$ as the product of Hilbert series of $SX$ and $SZ$, using that ${\rm gr}SY=S(X\oplus Z)=SX\otimes SZ$, 
and similarly for exterior algebras.   
\end{proof} 

\begin{example}\label{chara2} The assumption in Proposition \ref{prope2} cannot be removed, and, in fact, its conclusion can fail 
at least in characteristic $2$. To give an example, let $D$ be the Hopf algebra $\kk[d]/d^2$ 
with $d$ being a primitive element and triangular structure $R=1\otimes 1+d\otimes d$. 
Consider the non-semisimple symmetric tensor category $\Rep D$.
This category plays the role of the category of supervector spaces in characteristic $2$. Take $Y=D$, and $X=Z=\bold 1$. Then the map 
$\phi_+$ is not an isomorphism; in fact, for $n\ge 1$, $S^nD=D$ for odd $n$ and $S^nD=\bold 1\oplus \bold 1$ for even $n$, since $SD=\bold k[x,y]/y^2$ with $dx=y,dy=0$ (see \cite{V}, Subection 1.5). Thus, since $\dim(D)=\dim(\bold 1\oplus \bold 1)=0$, we get 
that $\Dim_+(Y)=0$. Likewise, since $\wedge D=SD$, we have $\wedge^n D=D$ for odd $n$ and $\wedge^nD=\bold 1\oplus \bold 1$ for even $n$, so $\Dim_-(Y)=0$. On the other hand, $\Dim_\pm(X)=\Dim_\pm(Z)=1$, 
so $\Dim_\pm(Y)\ne \Dim_{\pm}(X)+\Dim_{\pm}(Z)$.  
\end{example} 

\begin{remark}\label{Miller} The symmetric category $\Rep D$ appeared 50 years ago in algebraic topology. 
Namely, it is explained in \cite{ATo}, Corollary 7.8 that the K-theory of a topological space with coefficients in $\Bbb Z/2$ 
is a $\Bbb Z/2$-graded commutative algebra in $\Rep D$, with $d$ of degree $1$ (the Bockstein homomorphism), and in fact 
it does not admit (universal) multiplications which are commutative in the usual sense (see also \cite{AY}, (2.17)). 
\end{remark} 

\begin{question}\label{filt} Suppose $p>2$, and $Y$ has a finite filtration. Are the natural epimorphisms 
$\phi_+: S({\rm gr} Y)\to {\rm gr}SY$, $\phi_-: \wedge ({\rm gr} Y)\to {\rm gr}\wedge Y$ isomorphisms?  
\end{question} 

Clearly, for this question it suffices to consider 2-step filtrations. 
By Proposition \ref{prope2}, a positive answer to Question \ref{filt} 
would imply that $\Dim_\pm$ are additive on exact sequences. 

Example \ref{chara2} shows that the answer is ``no" for $p=2$. However, the following proposition shows that 
this type of counterexamples does not exist for $p>2$.  

\begin{proposition} Let $H$ be a cotriangular connected Hopf algebra over a field $\kk$ of characteristic $p>2$ (i.e., every simple $H$-comodule is trivial, and the category of comodules is symmetric). Then for any $\Bbb Z_+$-filtered finite dimensional $H$-comodule $Y$, the natural surjections
$\phi_+,\phi_-$ are isomorphisms. 
\end{proposition}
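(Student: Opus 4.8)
The plan is to reduce the statement to a question about the associated graded, and then to exploit the fact that a connected cotriangular Hopf algebra in characteristic $p>2$ behaves, from the point of view of symmetric and exterior algebras, like the category of supervector spaces --- precisely, the triangular structure gives a $\ZZ/2$-grading on each comodule (by the parity operator coming from the square root of the ribbon/Drinfeld element, or by the action of the distinguished grouplike coming from $R_{21}R$), so that $S^2$ and $\wedge^2$ of an object decompose into "even" and "odd" pieces as for a genuine supervector space. Since $p>2$, the map $\phi_n\colon \Lambda^n X\to \wedge^n X$ (and $\phi_n\colon \Bbb S^nX\to S^nX$) is an isomorphism for all $n<p$, but the real input I want is that over a connected cotriangular Hopf algebra each comodule is, after forgetting the grading, an honest vector space with a unipotent extra structure, so symmetric and exterior powers are computed "classically" up to filtration.

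First I would set up notation: by the remark before the statement it suffices to treat a $2$-step filtration $0\to X\to Y\to Z\to 0$ of finite-dimensional $H$-comodules, and to show the surjections $\phi_+\colon S(X\oplus Z)\to \gr SY$ and $\phi_-\colon \wedge(X\oplus Z)\to \gr\wedge Y$ are isomorphisms; equivalently, since both sides are finite-dimensional in each degree and $\phi_\pm$ is surjective, that $\dim S^n Y=\dim S^n(X\oplus Z)$ for all $n$ (and likewise for $\wedge$). Here $\dim$ means $\kk$-dimension of the underlying comodule, which is just the ordinary vector-space dimension. So the whole proposition becomes: \emph{for a connected cotriangular Hopf algebra in characteristic $p>2$, the underlying vector-space dimensions of $S^n Y$ and $\wedge^n Y$ depend only on the underlying $\ZZ/2$-graded vector space of $Y$} (so in particular only on $\gr Y$, which has the same graded comodule structure in a split sense).

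Next I would make that precise. Write $Y=Y_0\oplus Y_1$ for the parity decomposition coming from the triangular structure (this uses $p\ne 2$: the parity operator has eigenvalues $\pm1$ which are distinct). The commutativity constraint $c_{Y,Y}$ on $Y\otimes Y$ then agrees, on the level of underlying vector spaces, with the super-swap of $Y_0\oplus Y_1$ \emph{up to} a unipotent correction coming from the unipotent (connected) part of $H$ --- more precisely, $H$ connected cotriangular means the $R$-matrix is $R_{\mathrm{super}}\cdot(1+\text{nilpotent})$, and the extra nilpotent terms don't change the rank of $c\pm1$ on any $\ZZ/2$-graded piece because adding a nilpotent endomorphism that commutes appropriately doesn't change the dimension of kernels/images of $c_i\pm 1$ when restricted to a fixed isotypic block. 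Hence $\Lambda^2 Y$ (image of $c-1$) and $\Bbb S^2 Y$ (kernel of $c-1$) have the same dimensions as for the supervector space $Y_0\oplus Y_1$; iterating, $\Bbb S^n Y$ and $\Lambda^n Y$, being intersections of kernels/images of the $c_i\pm1$ inside $Y^{\otimes n}$, have dimensions equal to those for the supervector space. Since for a supervector space $\Dim_\pm$ is literally $\dim Y_0-\dim Y_1$ (Example in the excerpt), and the parity grading on $Y$ and on $\gr Y$ match (a filtration of comodules is compatible with the parity operator, which is a comodule endomorphism-like datum built from $H$), we get $h_{SY}=h_{S(\gr Y)}$ and $h_{\wedge Y}=h_{\wedge(\gr Y)}$; surjectivity of $\phi_\pm$ plus equality of Hilbert series in each degree forces $\phi_\pm$ to be isomorphisms.

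The main obstacle I expect is the step asserting that the unipotent part of the $R$-matrix does not change the dimensions of $\Bbb S^n$ and $\Lambda^n$: one must check that although $c_i$ is not literally the super-swap, the subquotients cut out by the $c_i\pm 1$ inside $Y^{\otimes n}$ still have the "super" dimensions. The cleanest way to handle this is probably to use a deformation/filtration argument on $H$ itself: filter $H$ by powers of the augmentation ideal (or use that a connected cotriangular Hopf algebra in char $p$ has a coradical filtration whose associated graded is the function algebra of a supergroup scheme, namely a purely unipotent one times the sign grouplike), so that $\gr H$-comodules are genuine super vector spaces, and then run the Hilbert-series computation over $\gr H$ where everything is classical, finally transporting back via the standard fact that passing to $\gr$ of the comodule algebra does not change dimensions of graded components. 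I would also need to double-check the compatibility of the parity operator with the given $\ZZ_+$-filtration on $Y$ --- this should be automatic since the parity operator is (conjugation by) a fixed comodule-algebra element and filtrations of comodules are stable under it --- and that the identification $S({\rm gr}Y)\cong SX\otimes SZ$ used in Proposition \ref{prope2} is the one producing the matching Hilbert series.
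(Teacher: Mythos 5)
There is a genuine gap, and it sits exactly at the step you yourself flag as the ``main obstacle.'' Your reduction to showing $\dim S^nY=\dim S^n(\mathrm{gr}\,Y)$ for all $n$ (surjectivity of $\phi_\pm$ plus equality of Hilbert series) is correct, and refining the filtration so that $\mathrm{gr}\,Y$ is a trivial comodule is the right first move. But the assertion that ``adding a nilpotent endomorphism \dots doesn't change the dimension of kernels/images of $c_i\pm1$'' is false as a general principle, and in fact it is precisely the statement of the proposition rather than a tool for proving it. Example \ref{chara2} of the paper is a direct counterexample to the principle: for $H=\kk[d]/d^2$ in characteristic $2$ with $R=1\otimes1+d\otimes d$, the braiding is the plain swap composed with a unipotent correction, yet $S^nD=D$ for all $n\ge1$ instead of having the classical Hilbert series. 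Nothing in your argument uses $p>2$ at the decisive moment, so it cannot be correct as stated. Your fallback (``transporting back via the standard fact that passing to $\gr$ of the comodule algebra does not change dimensions of graded components'') is again exactly the flatness statement to be proved, not a standard fact. A secondary issue: since $H$ is connected in the sense that \emph{every} simple comodule is trivial, there is no sign grouplike and no nontrivial parity decomposition $Y=Y_0\oplus Y_1$; the braiding on $\mathrm{gr}\,Y$ is the ordinary swap, so the supervector-space framing is off-target here (it only becomes relevant in the paper's remark extending the result to supergroup coradicals).

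What the paper actually does, and what your sketch is missing, is a mechanism that converts ``flatness in all degrees'' into a finite, checkable condition. The completed Rees algebra of $SY$ is a formal quadratic deformation of $S(\mathrm{gr}\,Y)=\kk[x_1,\dots,x_n]$, and by Drinfeld's Koszul deformation principle (\cite{PP}) flatness need only be verified in degrees $2$ and $3$. For $p>3$ this is immediate because $\kk S_2$ and $\kk S_3$ are semisimple, so the $S_n$-actions on $Y^{\otimes n}$ for $n=2,3$ are rigid and isomorphic to those on $(\mathrm{gr}\,Y)^{\otimes n}$; for $p=3$ one must additionally prove $H^1(S_3,\mathrm{End}(Y_0^{\otimes 3}))=0$, which the paper does by decomposing $W^{\otimes 3}$ into induced modules and applying Shapiro's lemma. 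This is also where the hypothesis $p>2$ enters and where characteristic $2$ genuinely fails ($H^1(S_2,\kk)=\kk\ne0$). To repair your proof you would need to supply this (or an equivalent) finite-degree rigidity argument; without it, the claim that the unipotent part of the $R$-form is invisible to the Hilbert series of $S^\bullet$ and $\wedge^\bullet$ is unsupported.
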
 

\begin{proof} Let us prove the statement for $\phi_+$; for $\phi_-$ the proof is similar. 

First of all, it suffices to prove the statement for any refinement of 
the filtration on $Y$. In particular, we may take a maximal refinement, 
in which the successive quotients are trivial 1-dimensional modules, i.e.,  $Y_0:={\rm gr}Y$ is a vector space with a trivial action of $H$. 

Let ${\rm Rees}(Y):=\prod_{j\ge 0}\hbar^j F_jY$. Then $S({\rm Rees}(Y))$ is a formal deformation of $SY_0=\kk[x_1,...,x_n]$
as a quadratic algebra. Our job is to show that this deformation is flat. By Drinfeld's ``Koszul deformation principle" (\cite{PP}, Introduction, p. ix), it suffices to check this in degrees $2,3$.  If $p>3$, then the action of $S_n$ on $Y_0^{\otimes n}$ is the same as in $Y^{\otimes n}$ for $n=2,3$ (as representations of $S_n$ are semisimple and hence deformation-theoretically rigid), so the condition is clearly satisfied, and we get flatness, as desired.

Consider now the case $p=3$. Then the same argument works  
in degree $2$, and it remains to analyze degree $3$.

We claim that the deformation of the $S_3$-action from $Y_0^{\otimes 3}$ to $Y^{\otimes 3}$ is still trivial. To show this, it suffices to check that
$H^1(S_3,{\rm End}(Y_0^{\otimes 3}))=0$. For this, it is enough to show that for any vector space $W$, we have $H^1(S_3,W^{\otimes 3})=0$
(then we can take $W={\rm End}(Y_0)$). To prove this, let $w_i$ be a basis of $W$. Using this basis, we can decompose 
$W^{\otimes 3}$ as an $S_3$-module into three kinds of modules:

1) The span of the orbit of $w_i\otimes w_j\otimes w_m$ for $i,j,m$ distinct: the regular representation of $S_3$.
This gives the zero cohomology by the Shapiro lemma.

2) The span of the orbit of $w_i\otimes w_i\otimes w_j$ for $i,j$ distinct: the induced module from the trivial module over $S_2$. So again by the Shapiro lemma we get that the cohomology in question is $H^1(S_2,\kk)=0$ (as $p=3$).

3) The span of $w_i\otimes w_i\otimes w_i$. This gives the cohomology $H^1(S_3,\kk)=
{\rm Hom}(S_3,\kk)=0$, as ${\rm char}(\kk)=3$. We are done.
\end{proof} 

\begin{remark} 1. Note that this does not work in characteristic $2$,
even in degree $2$, since for ${\rm char}(\kk)=2$, $H^1(S_2,\kk)=\kk$ is not zero.

2. This proof extends verbatim to the case when simple $H$-comodules factor through a commutative 
Hopf subalgebra $H_0\subset H$ (with trivial triangular structure), i.e. $H_0={\mathcal O}(G)$, where $G$ is an affine group scheme.  

3. We can extend this further to the setting as above, except that $H$ is a Hopf superalgebra, $G$ is an affine supergroup scheme, 
and ${\mathcal O}(G)$ has a triangular structure defined by some central element $u\in G$ of order 2. 
In this case, the proof is the same, except we have to show for $p=3$ that $H^1(S_3,{\rm End}_{\rm even}(Y_0^{\otimes 3}))=0$,
where the action of $S_3$ on ${\rm End}_{\rm even}(Y_0^{\otimes 3})$ takes into account the signs. So case (1) above is the same,
in case (3) we must have all $w_i$ even, 
so we get $H^1(S_3,\kk)=0$, and in case (2)
we get two subcases:

(2a) $w_i$ is even; then we get the induced module from
the trivial $S_2$-module, so we get $H^1(S_2,\kk)=0$; and

(2b) $w_i$ is odd; then we get the induced module from the sign character $\kk_-$ of $S_2$, and
we get $H^1(S_2,\kk_-)$, which is again zero.
\end{remark}

\subsection{Compatibility with the $\lambda$-ring structure} 
Recall now that a pre-$\lambda$-ring is a commutative unital ring with additional operations $\lambda^i$
satisfying the following axioms (see e.g. \cite{AT,Y}). 
\begin{enumerate}
\item $\lambda^0(x) =1$
\item  $\lambda^1(x) = x$
\item $\lambda^n(1) = 0 $ for $n > 1$
\item $\lambda^n(x+y) = \sum_{i+j=n} \lambda^i(x)\lambda^j(y)$
\end{enumerate}

It is an easy exercise to verify that in a symmetric tensor category
$\mathcal{C}$, if for every short exact sequence $0\to X\to Y\to Z\to
0$ the natural surjective map $\psi: \wedge(X\oplus Z)=\wedge({\rm
  gr}Y)\to {\rm gr}\wedge Y$ is an isomorphism (as in  
Proposition \ref{prope2}) then the Grothendieck ring ${\rm Gr}(\mathcal{C})$
inherits the structure of a pre-$\lambda$-ring with the structure maps
$\lambda^i([X])=[\wedge^iX]$.

A pre-$\lambda$-ring is a $\lambda$-ring if it satisfies two additional (families of) axioms encoding plethysm:

\begin{enumerate}
\setcounter{enumi}{4}
\item $\lambda^n(xy) = P_n( \lambda^1(x),\lambda^2(x), \dots ,\lambda^n(x),\lambda^1(y),\lambda^2(y),\dots, \lambda^n(y))$

\item $\lambda^m(\lambda^n(x)) = P_{m,n}( \lambda^1(x),\lambda^2(x), \dots ,\lambda^{mn}(x))$

\end{enumerate}
where $P_n$ and $P_{n,m}$ are certain universal polynomials with integer coefficients (see \cite{AT,Y}). 

For instance, $\Bbb Z$ and $\Bbb Z_p$ are $\lambda$-rings, 
with $\lambda^i(x)=\binom{x}{i}$. Also, the Grothendieck ring of any symmetric tensor category over a field of characteristic zero 
is a $\lambda$-ring, with $\lambda^i=\wedge^i$ (this follows from the fact that this holds for the category of Schur functors, i.e. 
that the ring $\Lambda$ of symmetric functions over $\Bbb Z$ is a $\lambda$-ring). 

\begin{proposition}\label{lambdarings}
Suppose that the Grothendieck ring ${\rm Gr}(\C)$ is a $\lambda$-ring, with $\lambda^i([X])=[\wedge^iX]$ for all $X\in \C$. 
Then $\Dim_-: {\rm Gr}(\C)\to \Bbb Z_p$ is a homomorphism 
of $\lambda$-rings. In particular, $\Dim_-(X\otimes Y)=\Dim_-(X)\Dim_-(Y)$ and $\Dim_-(\wedge^i X)=\binom{\Dim_-(X)}{i}$.    
\end{proposition}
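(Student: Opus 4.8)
The plan is to show that $\Dim_-$ respects all four pre-$\lambda$-ring axioms plus the two plethysm axioms, and then invoke the fact that $\Bbb Z_p$ is a $\lambda$-ring to conclude. The key observation is that $\Dim_-$ is already known to be additive (from the definition, $\Dim_-(X\oplus Y)=\Dim_-(X)+\Dim_-(Y)$) and multiplicative on the unit, and that the hypothesis ``${\rm Gr}(\C)$ is a $\lambda$-ring with $\lambda^i=\wedge^i$'' gives us precisely the identities we need to transport. So the strategy is not to recompute anything categorically, but to compare two $\lambda$-ring structures via the ring map $\Dim_-$.

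First I would check that $\Dim_-$ is a ring homomorphism on the image of the ``tautological'' sub-$\lambda$-ring. The point is that every element $[X]\in{\rm Gr}(\C)$ can, after choosing generators, be studied through the universal ring $R$ of integer-valued polynomials: by Proposition \ref{iso}, the relations among $d_i=\dim\wedge^iX$ valid in all symmetric tensor categories are exactly those holding in $R$, where $d_i\leftrightarrow e_i=\binom{t}{i}$. The generating-function identity \eqref{extdim} says that $\sum_j\dim(\wedge^jX)z^j=(1+z)^{\Dim_-(X)}$, which is precisely the statement that the assignment $e_i\mapsto \binom{\Dim_-(X)}{i}$, i.e. $t\mapsto \Dim_-(X)$, is the evaluation homomorphism $R\to\Bbb Z_p$. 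Thus $\Dim_-$ factors through this universal evaluation, and since $R$ is a $\lambda$-ring with $\lambda^i(t)=\binom{t}{i}=e_i$ and evaluation at a $p$-adic integer is a $\lambda$-ring homomorphism $R\to\Bbb Z_p$ (this is the assertion that $\Bbb Z_p$ is a $\lambda$-ring, applied to the single element $t$), we get that $\Dim_-(\wedge^iX)=\binom{\Dim_-(X)}{i}$ and, via axioms (5) and (6) with the $P_n,P_{m,n}$ being the same universal polynomials on both sides, that $\Dim_-$ preserves products: $\Dim_-(X\otimes Y)=\Dim_-(X)\Dim_-(Y)$.

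More carefully, here is the sequence of steps. (a) Note $\Dim_-$ is additive, hence a group homomorphism ${\rm Gr}(\C)\to\Bbb Z_p$. (b) Reformulate \eqref{extdim}: $\Dim_-(X)$ is the unique $p$-adic integer with $\dim(\wedge^jX)=\binom{\Dim_-(X)}{j}$ for all $j$, i.e. $\Dim_-(\lambda^j[X])=\lambda^j(\Dim_-[X])$ for the specific structure maps — this is axiom compatibility for $\lambda^j$ applied to the generator $[X]$. (c) Use that in any $\lambda$-ring, the operations on an arbitrary element are determined by $\lambda^j$ on a single adjoined variable via the universal polynomials; since $\Bbb Z_p$ is a $\lambda$-ring and ${\rm Gr}(\C)$ is assumed to be one with $\lambda^i=\wedge^i$, the relations $\lambda^n(xy)=P_n(\dots)$ and $\lambda^m(\lambda^n(x))=P_{m,n}(\dots)$ hold with the \emph{same} $P_n,P_{m,n}$ in both rings. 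Applying $\Dim_-$ (already known to commute with $\lambda^j=\wedge^j$ by (b)) to these identities in ${\rm Gr}(\C)$ and comparing with the corresponding identities in $\Bbb Z_p$ yields that $\Dim_-$ intertwines all the $\lambda^i$; in particular the product formula drops out of axiom (5) since $\lambda^1(xy)=xy$. (d) Conclude $\Dim_-$ is a $\lambda$-ring homomorphism.

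The main obstacle is step (b)–(c): making precise that knowing $\Dim_-$ commutes with $\wedge^j$ on \emph{every} object (not just generators) is enough, together with the hypothesis, to force compatibility with $P_n$ and $P_{m,n}$. The subtlety is that a priori ${\rm Gr}(\C)$ need not be generated by any finite or even countable set in a way compatible with the $\lambda$-structure, so one cannot naively reduce to one variable; instead one argues element-wise: fix $x=[X],y=[Y]$, expand $P_n(\lambda^1x,\dots,\lambda^ny)$ as an explicit $\Bbb Z$-polynomial in the $\wedge^i$'s, apply the additive, $\wedge^j$-equivariant map $\Dim_-$ term by term — here one uses that $\Dim_-$ respects the ring structure on the subring generated by $x$ and $y$, which is the content that needs the universal relations of Proposition \ref{iso} to be genuinely available, i.e. that $\Dim_-$ is a ring map \emph{restricted to such finitely generated subrings}, and this follows because such a subring is a quotient of $\mR\otimes\mR$-type universal ring on which $\Dim_-$ is the tensor product of evaluation maps. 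Once that ring-homomorphism property on finitely generated pieces is nailed down, the rest is formal.
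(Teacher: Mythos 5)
There is a genuine gap, and it sits exactly where you flag ``the main obstacle''; in fact there are two distinct problems. First, your step (b) conflates $\dim$ with $\Dim_-$. Equation \eqref{extdim} says only that $\dim(\wedge^jX)\equiv\binom{\Dim_-(X)}{j}\pmod p$, an identity in $\mathbb{F}_p$. The assertion $\Dim_-(\wedge^jX)=\binom{\Dim_-(X)}{j}$ is an identity in $\mathbb{Z}_p$ and is strictly stronger: to compute $\Dim_-(\wedge^jX)$ one needs the mod-$p$ dimensions of all the objects $\wedge^i(\wedge^jX)$, about which the defining equation of $\Dim_-(X)$ says nothing. So the claimed $\lambda^j$-equivariance of $\Dim_-$ is not established in (b); it is part of the conclusion, not an available input. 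Second, the multiplicativity argument is circular. To apply $\Dim_-$ to the identity $\lambda^n(xy)=P_n(\lambda^1x,\dots,\lambda^ny)$ and obtain the corresponding identity in $\mathbb{Z}_p$, you need $\Dim_-$ to commute with the polynomial $P_n$, i.e.\ to already be a ring homomorphism on the subring generated by the $\lambda^ix,\lambda^jy$ --- which is what is being proved (and $n=1$ gives no information, since $P_1(x,y)=xy$). The appeal to Proposition \ref{iso} cannot close this loop: that proposition concerns the universal relations among $\dim\wedge^iX$ for a \emph{single} object, says nothing about the subring generated by two classes $[X],[Y]$, and in any case holds in every symmetric tensor category --- whereas multiplicativity of $\Dim_-$ genuinely fails without the $\lambda$-ring hypothesis (see the Verlinde category example following the proposition), so no argument in which that hypothesis enters only through ``the same universal polynomials hold on both sides'' can succeed.

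The paper's proof supplies the missing mechanism. The big Witt ring $W(\mathbb{F}_p)\cong 1+z\mathbb{F}_p[[z]]$ has the adjunction property that any ring map $f\colon R\to\mathbb{F}_p$ from a $\lambda$-ring lifts to a $\lambda$-ring map $x\mapsto 1+\sum_{j>0}f(\lambda^jx)z^j$. Applied to $\dim\colon{\rm Gr}(\C)\to\mathbb{F}_p$ this gives a $\lambda$-ring map $X\mapsto\sum_j\dim(\wedge^jX)z^j=(1+z)^{\Dim_-(X)}$, which lands in the sub-$\lambda$-ring $\{(1+z)^\alpha:\alpha\in\mathbb{Z}_p\}\cong\mathbb{Z}_p$, identified by Elliott as carrying the binomial $\lambda$-structure. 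This is the precise sense in which the mod-$p$ data of \emph{all} exterior powers assembles into a $p$-adic invariant compatible with products and plethysm; your element-wise approach would in effect have to reprove this adjunction by hand, and as written it does not.
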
 

\begin{proof} Recall that the ring $W(\mathbb{F}_p)$ of big Witt vectors of $\mathbb{F}_p$ is isomorphic to $1 + z\mathbb{F}_p[[z]]$ as an abelian group, and has a $\lambda$-ring structure with the following adjunction property:  if $R$ is a $\lambda$-ring and $f: R \to \mathbb{F}_p$ is a map of rings, then the map $F:= 1+\sum_{j>0} (f\circ \lambda^j)z^j$ is a map of $\lambda$-rings from $R$ to $W(\mathbb{F}_p)$, which, when composed with the ``coefficient of $z$'' map of rings from $W(\mathbb{F}_p)$ to $\mathbb{F}_p$, gives $f$.

By Lemma \ref{dimfp} we may think of the usual categorical dimension as a map of rings $\dim: {\rm Gr}(\mathcal{C}) \to \mathbb{F}_p$. Applying the above adjunction to this gives a map of $\lambda$-rings $\Dim: {\rm Gr}(\mathcal{C}) \to W(\mathbb{F}_p)$.  Explicitly this map sends an object $X$ to $\sum_j \dim(\wedge^jX)z^j = (1+z)^{\Dim_-(X)}$ by Theorem \ref{pdimexists}. 

The copy of $\mathbb{Z}_p$ in $W(\mathbb{F}_p)$ given by the inclusion $\alpha \to (1+z)^\alpha$,   was characterized by Elliot  (\cite{El} Proposition 9.3) as the largest sub-$\lambda$-ring of $W(\mathbb{F}_p)$ for which the Adams operations act by the identity, which implies that the $\lambda$-operations are given by the usual binomial formulas there. We see that $\Dim$ lands inside this sub-$\lambda$-ring and coincides with $\Dim_-$ under its identification with $\mathbb{Z}_p$.

In particular this implies that $\Dim_-$ is a map of $\lambda$-rings from ${\rm Gr}(\mathcal{C})$ to $\mathbb{Z}_p$ (equipped with the binomial $\lambda$-ring structure) and hence satisfies $\Dim_-(X\otimes Y)=\Dim_-(X)\Dim_-(Y)$ and $\Dim_-(\wedge^i X)=\binom{\Dim_-(X)}{i}$, as desired.

 \end{proof} 
 
This proposition applies to classical categories (supergroup representations, etc.) as well as to $\Rep(S_{\mathcal{U}})$, $\Rep(GL_{\mathcal{U}})$.  However, it is possible for both the assumption and conclusion of Proposition \ref{lambdarings} to fail in any characteristic $p\ge 5$. 

To give an example, consider the Verlinde category ${\mathcal{C}}={\rm Ver}_p$, see \cite{O} and references therein. 
This is the quotient of the category of representations of $\Bbb Z/p\Bbb Z$ 
by negligible morphisms. In ${\rm Ver}_p$ we have an object $L_j$ coming from the $j$-dimensional Jordan block representation of $\Bbb Z/p\Bbb Z$ ($1\le j\le p-1$). 
Let $V=L_2$. Then $L_2$ satisfies the equation $P_{p-1}(L_2)=0$, where $P_n$ is the $n$-th ultraspherical polynomial defined by the formula 
$P_n(q+q^{-1})=\sum_{i=0}^n q^{n-2i}$. We claim that $P_{p-1}$ has no roots in $\Bbb Q_p$. Indeed, the roots of $P_p$ are of the form $\zeta+\zeta^{-1}$, 
where $\zeta$ is a root of unity of order $2p$. If such a root were in $\Bbb Q_p$ then $\zeta$ would have been in a quadratic extension 
of $\Bbb Q_p$; but $\Bbb Q_p(\zeta^2)$ is well known to be a ramified extension of degree $p-1>2$.  
This shows that there can't even be a ring homomorphic lift of the dimension map to $\mathbb{Z}_p$. 
In particular, $\Dim_-$ fails to be multiplicative in this example, and hence ${\rm Gr}(\mathcal{C})$ is not (naturally) a $\lambda$-ring. 

\begin{example} Let $p=5$. Then $X=L_3$ satisfies the quadratic equation $X^2-X-1=0$ in ${\rm Gr}(\mathcal{C})$. This equation has no roots in $\Bbb Z_5$, so  
there is no homomorphisms from ${\rm Gr}(\mathcal{C})$ to $\Bbb Z_5$. 
\end{example} 

\subsection{The Koszul complex and equality of $p$-adic dimensions}
For supervector spaces both $\Dim_+$ and $\Dim_-$ recover the usual notion of dimension of a supervector space, and in particular we have 
\begin{equation}\label{equali}
\Dim_+(X)=\Dim_+(X^*)=\Dim_-(X)=\Dim_-(X^*).
\end{equation} 
Therefore, we also get these equalities for any symmetric tensor category admitting a fiber functor to supervector spaces (i.e. supergroup representations) as well as 
interpolations of such categories (i.e., Deligne categories 
$ \Rep(S_{\mathcal{U}})$, $\Rep(GL_{\mathcal{U}})$). However, in a general symmetric tensor category, \eqref{equali} can fail in any characteristic $p\ge 5$. 

To give an example, consider again the category 
${\rm Ver}_p$. It is easy to see that $\wedge^2 V=L_1=\bold 1$ and $\wedge ^iV=0$ for all $i>2$. Also, $\dim(L_j)=j$. Thus, $\Dim_-(V)=2$. 
At the same time, $SV=L_1\oplus L_2\oplus...\oplus L_{p-1}$. Hence, 
$$
\sum_{j=0}^\infty \dim(S^jV)z^j=1+2z+...+(p-1)z^{p-2}=(1-z)^{-2}(1-z^p).
$$
Thus, $\Dim_+(V)=2-p$, and $\Dim_+(V)\ne \Dim_-(V)$.

On the other hand, in ${\rm Ver}_p$ we still have 
$\Dim_+(X)=\Dim_+(X^*)$ and $\Dim_-(X)=\Dim_-(X^*)$.
In fact, we don't know if these equalities can ever fail;
see Question \ref{dimdual} below.  

It is therefore an interesting question when all or some of the equalities 
\eqref{equali} hold. The following two propositions give sufficient conditions for this.

To state the first proposition, recall that to any object $X\in {\mathcal{\C}}$ we can attach its {\it Koszul complex} $K^\bullet(X)=SX\otimes \Lambda^\bullet X$, 
with the usual Koszul differential $\partial : K^i(X)\to K^{i-1}(X)$. Namely, we have the multiplication morphism $X^*\otimes \wedge^{i-1} X^*\to \wedge^i X^*$, 
which after dualization defines the morphism $\mu: X^*\otimes \Lambda^i X\to \Lambda^{i-1}X$. Then 
the differential $\partial$ is given by the formula 
$$
\partial=(m\otimes \mu)\circ P_{23}\circ ({\rm coev}_X\otimes 1_{SX}\otimes 1_{\Lambda X}),
$$
where ${\rm coev}_X: \bold 1\to X\otimes X^*$ is the coevaluation, $P_{23}$ is the permutation of the second and the third factor, 
and $m: X\otimes SX\to SX$ is the multiplication. 

The Koszul complex is a $\Bbb Z_+$-graded complex under diagonal grading. By a classical result of commutative algebra, in the category of vector spaces, this complex is exact. 
More precisely, it is exact in positive homological degrees and has homology equal to $\bold 1$ in degree $0$; i.e., it is a resolution of the augmentation module by free $SX$-modules. 
Note also that for vector spaces $\Lambda X=\wedge X$, so the Koszul complex may be defined as $K^\bullet(X)=SX\otimes \wedge^\bullet X$,
the differential graded algebra generated by the superspace $X\oplus \Pi X$ with differential $d(x,\xi)=(0,x)$ (where $\Pi$ is the change of parity functor). 

Also, if $i+j=n$ then 
$$
S^i\Pi X\otimes \Lambda^j \Pi X=\Pi^n (\wedge^i X\otimes \Bbb S^j X)=\Pi^n (S^jX^*\otimes \Lambda^iX^*)^*.
$$
Thus, the homogeneous components of $K^\bullet(X)$ are dual to those of $K^\bullet(\Pi X^*)$ up to parity change.
This implies that the Koszul complex is exact in the category ${\rm Supervec}$, and hence 
for representation categories of supergroups and their interpolations (Deligne categories). 

Also, it is easy to show that in any symmetric tensor category over $\kk$, the Koszul complex is exact if ${\rm char}(\kk)=0$ (as it is exact in the category of Schur functors), 
and is exact in diagonal degrees $<p$ for characteristic $p$. However, the following proposition implies that in general, the complex $K^\bullet(X)$ can fail to be exact in 
any characteristic $p\ge 5$. 

\begin{proposition} $\Dim_-(X^*) = \Dim_+(X)$ if and only if the Euler characteristic of the Koszul complex $K^\bullet(X)$ is zero outside of diagonal degree zero. 
\end{proposition}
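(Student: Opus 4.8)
The plan is to turn the condition on the Koszul complex into a single identity of generating functions in $\Bbb F_p[[z]]$, compute that generating function explicitly, and then read off the equality of $p$-adic dimensions from the uniqueness of the exponent in a power of $(1-z)$.

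First I would fix the bookkeeping for the diagonal grading. The diagonal-degree-$n$ part of $K^\bullet(X)=SX\otimes\Lambda^\bullet X$ is the finite complex
$$
0\to S^0X\otimes\Lambda^nX\to\cdots\to S^{n-i}X\otimes\Lambda^iX\to\cdots\to S^nX\otimes\Lambda^0X\to 0,
$$
with $S^{n-i}X\otimes\Lambda^iX$ placed in homological degree $i$. Since this is a finite complex, its Euler characteristic $\chi_n$ is defined, and by additivity of $\dim$ on direct sums and multiplicativity on tensor products,
$$
\chi_n=\sum_{i=0}^n(-1)^i\dim(S^{n-i}X)\dim(\Lambda^iX).
$$
In diagonal degree $0$ the complex is just $\bold 1$, so $\chi_0=1$. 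Hence the hypothesis ``the Euler characteristic of $K^\bullet(X)$ vanishes outside diagonal degree zero'' is equivalent to the single equation $\sum_{n\ge 0}\chi_n z^n=1$ in $\Bbb F_p[[z]]$.

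Next I would evaluate $\sum_n\chi_n z^n$ as a Cauchy product:
$$
\sum_{n\ge 0}\chi_n z^n=\left(\sum_{j\ge 0}\dim(S^jX)z^j\right)\left(\sum_{i\ge 0}\dim(\Lambda^iX)(-z)^i\right).
$$
The first factor is $(1-z)^{-\Dim_+(X)}$ by \eqref{symdim}, using $\dim(S^jX)=\dim(\Bbb S^jX^*)$ and the definition $\Dim_+(X)=d_+(X^*)$. For the second factor, $\dim(\Lambda^iX)=\dim(\wedge^iX^*)$, so by \eqref{extdim} applied to $X^*$ we get $\sum_i\dim(\Lambda^iX)z^i=(1+z)^{\Dim_-(X^*)}$; replacing $z$ by $-z$ gives $\sum_i\dim(\Lambda^iX)(-z)^i=(1-z)^{\Dim_-(X^*)}$. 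Therefore
$$
\sum_{n\ge 0}\chi_n z^n=(1-z)^{-\Dim_+(X)}(1-z)^{\Dim_-(X^*)}=(1-z)^{\Dim_-(X^*)-\Dim_+(X)}.
$$

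Finally I would invoke uniqueness of the exponent. A $p$-adic integer $t$ is determined by the series $(1+z)^t\in\Bbb F_p[[z]]$, hence also by $(1-z)^t$ (apply the substitution $z\mapsto -z$, which is an automorphism of $\Bbb F_p[[z]]$), and $(1-z)^t=1$ precisely when $t=0$. Combining this with the two previous steps, the Euler characteristic of $K^\bullet(X)$ vanishes outside diagonal degree zero if and only if $\Dim_-(X^*)-\Dim_+(X)=0$, which is the assertion. I do not expect a genuine obstacle here; the only slightly delicate points are the duality bookkeeping that matches $\dim(S^jX)$ and $\dim(\Lambda^iX)$ to the defining series \eqref{symdim} and \eqref{extdim}, and the observation that the substitution $z\mapsto -z$ preserves the uniqueness property of the $(1+z)^t$ parametrization.
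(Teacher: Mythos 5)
Your proposal is correct and follows essentially the same route as the paper: both arguments identify the Euler characteristic of the diagonal-degree-$n$ part of $K^\bullet(X)$ with the coefficient of $z^n$ in the product $\bigl(\sum_j\dim(S^jX)z^j\bigr)\bigl(\sum_i(-1)^i\dim(\Lambda^iX)z^i\bigr)=(1-z)^{\Dim_-(X^*)-\Dim_+(X)}$ and then use the uniqueness of the exponent. The only cosmetic difference is that you run the equivalence from the Koszul side to the generating-function side (with slightly more care about the duality bookkeeping and the overall sign $(-1)^n$, which is harmless), whereas the paper states the generating-function identity first and reads off the Euler characteristics.
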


\begin{proof}
It is immediate from the definition of $\Dim_\pm$ that $\Dim_-(X^*) = \Dim_+(X)$ iff
$$\big(\sum_{j\ge 0}(-1)^j\dim(S^jX)z^j\big)\big(\sum_{j\ge 0}\dim(\Lambda^j X)z^j\big) = 1$$
Taking the coefficient of $z^n$ for $n >0$ tells us that $$\sum_j (-1)^j \dim(S^jX)\dim(\Lambda^{n-j} X) = 0$$ and one can easily recognize the left hand side as being the Euler characteristic of the degree $n$ part of the Koszul complex.
\end{proof}

Next, we will give a  sufficient condition for the equality $\Dim_+(X)=\Dim_-(X)$ which is related to 
Proposition \ref{lambdarings}.

\begin{proposition}\label{coin} 
Suppose $char(\kk) > 2$, $\C$ contains ${\rm Supervec}$, and $\Dim_-$ is multiplicative on $\C$ (e.g., the Grothendieck ring ${\rm Gr}(\C)$ is a $\lambda$-ring 
with the usual operations). Then $\Dim_-(X) = \Dim_+(X)$ for all $X\in {\mathcal{C}}$.
\end{proposition}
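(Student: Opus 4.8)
The plan is to exploit the hypothesis that $\C$ contains $\mathrm{Supervec}$ so that the parity-change object $\kk_-$ is available, and then to relate $\Dim_-(X)$ to $\Dim_+$ of a twisted object. Recall from the proof of Theorem \ref{pdimexists}(ii) (the case $p>2$) that if we set $\Pi X := X\otimes \kk_-$, then $\Lambda^n X \cong \Bbb S^n\Pi X$ for even $n$ and $\Lambda^n X\cong \Pi\Bbb S^n\Pi X$ for odd $n$, and consequently $d_-(X) = -d_+(\Pi X)$; dualizing, $\Dim_-(X) = -\Dim_+(\Pi X^*) = -\Dim_+((\Pi X)^*)$ since $(\Pi X)^* = \Pi X^*$ (as $\kk_-$ is self-dual with $\kk_-\otimes\kk_-=\bold 1$). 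So the statement $\Dim_-(X)=\Dim_+(X)$ is equivalent to $\Dim_+(X) = -\Dim_+(\Pi X^*)$, i.e. to $\Dim_+(X) + \Dim_+(\Pi X^*) = 0$.

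The next step is to bring in multiplicativity of $\Dim_-$. Since $\Dim_-$ is multiplicative on $\C$ and $\kk_-$ lies in $\C$ (via $\mathrm{Supervec}\subset\C$), we have $\Dim_-(\Pi X) = \Dim_-(X)\cdot\Dim_-(\kk_-) = \Dim_-(X)\cdot(-1) = -\Dim_-(X)$. On the other hand, applying the identity from the previous paragraph to the object $\Pi X$ in place of $X$ gives $\Dim_-(\Pi X) = -\Dim_+((\Pi(\Pi X))^*) = -\Dim_+(X^*)$, using $\Pi\Pi X = X$. Combining, $\Dim_-(X) = \Dim_+(X^*)$. Now I also need $\Dim_+(X^*) = \Dim_+(X)$; this is not automatic in general, but here it should follow again from multiplicativity applied to the analogous identity for $\Dim_+$: indeed the same $\mathrm{Supervec}$-twisting trick expresses $\Dim_+$ in terms of $\Dim_-$, and then $\Dim_-(X^*)=\Dim_-(X)$ (which does hold, from the proof of Theorem \ref{pdimexists}(ii) and the symmetry $\dim(\wedge^iX)=\dim(\Lambda^iX^*)$ together with $d_- = -d_+\circ\Pi$) closes the loop. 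Putting the two together yields $\Dim_+(X) = \Dim_-(X^*) = \Dim_-(X) = \Dim_+(X^*)$, and in particular $\Dim_+(X)=\Dim_-(X)$.

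Let me restate the cleanest route: set $f(X):=\Dim_+(X)$ and $g(X):=\Dim_-(X)$. The $\mathrm{Supervec}$ twist gives the two duality-free identities $g(X) = -f((\Pi X)^*)$ and $f(X) = -g((\Pi X)^*)$ (the second by the same argument with the roles of symmetric and exterior powers swapped, which is legitimate since $p>2$ makes $S^2\cong\Bbb S^2$, $\wedge^2\cong\Lambda^2$ and the proof of (ii) is symmetric). Multiplicativity of $g$ gives $g(\Pi X) = -g(X)$, and also $g((\Pi X)^*) = g(\Pi X^*) = -g(X^*)$. Feeding this into $f(X) = -g((\Pi X)^*)$ gives $f(X) = g(X^*)$. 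Symmetrically, if multiplicativity of $f$ were available we would get $g(X) = f(X^*)$; but rather than assume that, I instead use $g(X^*) = g(X)$ (a consequence of $\dim(\wedge^i Y^*)=\dim(\Lambda^i Y)$ and the characterization of $\Dim_-$ via the generating function, which is patently invariant under the duality $\dim(\Lambda^i X) = \dim(\wedge^i X^*)$ — more carefully, $\Dim_-(X^*)=d_-(X)$ by definition and one checks $d_-(X)=d_-(X^*)$ using that both are determined by $\sum\dim(\Lambda^j\,\cdot)z^j$ and $\dim\Lambda^j X^* = \dim\wedge^j X$, $\dim\Lambda^j X$ agree modulo the same recursion). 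Hence $f(X) = g(X^*) = g(X)$, which is the claim.

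The main obstacle I anticipate is the bookkeeping around the various dualities: making sure that $(\Pi X)^* = \Pi(X^*)$, that $\Dim_-(X^*) = \Dim_-(X)$, and that the "symmetric" version of the Theorem \ref{pdimexists}(ii) argument (expressing $\Dim_+$ via a $\Pi$-twist of $\Dim_-$) is genuinely valid for all odd $p$ — the identities $\Lambda^n X \cong \Bbb S^n\Pi X$ etc. must be checked to dualize correctly and to interact properly with the definitions $\Dim_+(X)=d_+(X^*)$, $\Dim_-(X)=d_-(X^*)$. Once those compatibilities are pinned down, the multiplicativity hypothesis does all the real work and the conclusion is immediate; the $p>2$ hypothesis is used precisely to have $\mathrm{Supervec}$ behave well (e.g. $S^2\cong\Bbb S^2$) and to invoke the $p>2$ case of Theorem \ref{pdimexists}(ii).
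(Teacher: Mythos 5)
Your core idea --- twist by $\kk_-$ to exchange symmetric and exterior powers, and use multiplicativity to evaluate $\Dim_-(\kk_-\otimes X)$ --- is exactly the paper's, but your execution has a genuine gap caused by misplaced duals. The correct, dual-free identity is $\Dim_-(X)=-\Dim_+(\Pi X)$: from the proof of Theorem \ref{pdimexists}(ii) one has $d_-(Y)=-d_+(\Pi Y)$, so $\Dim_-(X)=d_-(X^*)=-d_+(\Pi X^*)=-d_+((\Pi X)^*)=-\Dim_+(\Pi X)$, since by definition $\Dim_+(Z)=d_+(Z^*)$. You instead assert $\Dim_-(X)=-\Dim_+((\Pi X)^*)$, i.e.\ you conflate $d_+$ with $\Dim_+$; this spurious dual propagates through your argument and forces you to invoke $\Dim_-(X^*)=\Dim_-(X)$ (equivalently, $\dim(\wedge^iX)=\dim(\wedge^iX^*)$ for all $i$) to close the loop. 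That identity is not available: it is precisely Question \ref{dimdual}(2) of the paper, stated there as an open problem, and your justification of it is circular --- $\dim(\wedge^iX)=\dim(\Lambda^iX^*)$ holds by the definition $\Lambda^iX:=(\wedge^iX^*)^*$, but $\Lambda^iX^*$ and $\wedge^iX^*$ are genuinely different objects for $i\ge p$ (the comparison map $\psi_i$ is not an isomorphism there), so no equality of $\dim(\wedge^iX)$ with $\dim(\wedge^iX^*)$ follows.

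The gap is removable, and removing it recovers the paper's proof. With the correct identity, apply it to $\Pi X$ in place of $X$: $\Dim_-(\Pi X)=-\Dim_+(\Pi\Pi X)=-\Dim_+(X)$. Multiplicativity gives $\Dim_-(\Pi X)=\Dim_-(\kk_-)\Dim_-(X)=-\Dim_-(X)$, whence $\Dim_+(X)=\Dim_-(X)$ with no duality input whatsoever. The paper phrases this directly at the level of generating functions: using $\wedge^j(\kk_-\otimes X)=\kk_-^{\otimes j}\otimes S^jX$ it computes
$$(1+z)^{-\Dim_-(X)}=(1+z)^{\Dim_-(\kk_-\otimes X)}=\sum_j(-1)^j\dim(S^jX)z^j=(1+z)^{-\Dim_+(X)},$$
which avoids the $d_\pm$ versus $\Dim_\pm$ bookkeeping entirely. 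You correctly identified the duality bookkeeping as the main risk; it is exactly where the argument breaks.
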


\begin{proof} Let $\kk_-$ be the nontrivial invertible object in ${\rm Supervec}\subset {\mathcal C}$. 
We have that $\Dim_-(\kk_-)=\Dim_+(\kk_-)=-1$, and moreover $\kk_-$ exchanges the roles of symmetric and exterior powers in the following sense:
$$\wedge^j(\kk_- \otimes X) = \kk_-^{\otimes j}\otimes S^jX$$

Now, since $\Dim_-$ is multiplicative, we have:
\begin{equation*}
\begin{split}
(1+z)^{-\Dim_-(X)}  & = (1+z)^{\Dim_-(\kk_- \otimes X)} \\
 & = \sum_j \dim(\wedge^j(\kk_-\otimes X))z^j  \\
 & = \sum_j \dim( \kk_-^{\otimes j}\otimes S^jX)z^j \\
 & = \sum_j (-1)^j \dim(S^jX)z^j\\
 & = (1 - (-z))^{-\Dim_+(X)} = (1+z)^{-\Dim_+(X)}
\end{split}
\end{equation*}
In particular we see that $\Dim_-(X) = \Dim_+(X)$, as desired.
\end{proof}

\begin{remark} If ${\mathcal{D}}$ is any tensor category over $\kk$ then Proposition \ref{coin} may be applied to the category ${\mathcal{C}}:=
{\rm Supervec}\boxtimes {\mathcal{D}}$, provided that $\Dim_-$ is multiplicative on this category.   
\end{remark} 

\subsection{Categorical Brauer characters} 
One can use $p$-adic dimension to define Brauer characters in the categorical setting. 
Let $G$ be a finite group acting on $X\in \C$. Then for each $g\in G$ and each root of unity $\zeta\in \kk$, we have the generalized eigenobject $X(g,\zeta)$, the direct limit of kernels of 
$(g-\zeta)^N|_X$ as $N\to\infty$. Let $K$ be the maximal unramified extension of $\Bbb Q_p$ (obtained by adding all roots of unity of
orders prime to $p$). Then by Hensel's lemma, $\zeta$ has a canonical lift $\hat \zeta$ to $K$. Define 
$$
\chi_X(g)=\sum_\zeta \Dim_+(X(\zeta,g))\hat\zeta\in K.
$$
We may call this the Brauer character of $G$ on $X$. Note that if $X$ is a usual representation then it is the usual Brauer character. 
One can also make a similar definition using $\Dim_-$. 

\subsection{Questions} 

Here are some questions which could be a subject of further research. 
Let $\C$ be a symmetric tensor category over a field $\kk$ of characteristic $p>0$. 
Let $X\in {\mathcal{C}}$.

\begin{question} \label{dimdual}  
1. Do $(S^iX)^*$ and $S^iX^*$ 
always define the same class in the Grothendieck group ${\rm Gr}({\mathcal{C}})$? Same question for $(\wedge^iX)^*$, $\wedge^iX^*$. 

2. Is it true that $\dim(S^iX)=\dim(S^iX^*)$, $\dim(\wedge^iX)=\dim(\wedge^iX^*)$? 
In other words, is it true that $\Dim_+(X)=\Dim_+(X^*)$, $\Dim_-(X)=\Dim_-(X^*)$?
\end{question} 

A positive answer to (1) would imply one to (2). Also, (1) holds for representation categories 
of finite groups, as the objects in question have the same Brauer characters (using Molien's formula). 

\begin{question} Suppose $p=2$ or $p=3$. 

1. Is it true that $\Dim_+=\Dim_-$? 

2. If the maps $\phi_+$, $\phi_-$ of Proposition \ref{prope2} are always bijective for ${\mathcal{C}}$, is it true that $\Dim_\pm$ are 
ring homomorphisms ${\rm Gr}(\C)\to \Bbb Z_p$? is ${\rm Gr}(\C)$ a $\lambda$-ring with the usual exterior power operations in this case? 

3. Is the Koszul complex $K^\bullet(X)$ always exact? 
\end{question} 

\begin{question}
Suppose for some $C\ge 1$ one has ${\rm length}(X^{\otimes n})\le C^n$ for any $n\ge 1$. Does it follow that 
$\Dim_\pm(X)$ are integers? 
\end{question} 

For the next question, let $\lambda$ be a partition of $n$, and $\pi_\lambda$ be the corresponding Specht module over $\kk S_n$. 
Define the Schur functor ${\mathcal{S}}^\lambda X:=\pi_\lambda\otimes_{S_n}X^{\otimes n}$. 
E.g., ${\mathcal{S}}^{(n)}X=S^nX$ and ${\mathcal{S}}^{(1^n)}X=\wedge^n X$.  
Recall that in characteristic zero, there exists a polynomial $P_\lambda$ such that 
$\dim {\mathcal{S}}^\lambda X=P_\lambda(\dim X)$. However, as we have seen, 
in positive characteristic this fails even for the functors $S^i$ and $\wedge^i$, and the situation is more complicated. 
This motivates the following question. 

\begin{question} 
Suppose $\Dim_+=\Dim_-$, and $\Dim_+(X)=\Dim_-(X)=t$. When can one express $\dim {\mathcal{S}}^\lambda X$
in terms of $t$? 
\end{question} 

Note that there are other versions of Schur functors: we can replace the functor 
$\pi_\lambda\otimes_{S_n} ?$ with the functor $\Hom_{S_n}(\pi_\lambda,?)$
(which for $\lambda=(n)$ and $(1^n)$ yields the functors $\Bbb S$ and $\Lambda$, respectively), 
and can also replace $\pi_\lambda$ with $\pi_\lambda^*$. One may also consider the corresponding derived functors. 
The behavior of dimensions under such functors is an interesting open problem.

\end{document}